\documentclass[12pt]{article}
\oddsidemargin 0 mm
\topmargin -10 mm
\headheight 0 mm
\headsep 0 mm
\textheight 246.2 mm
\textwidth 159.2 mm
\footskip 9 mm
\setlength{\parindent}{0pt}
\setlength{\parskip}{5pt plus 2pt minus 1pt}
\pagestyle{plain}
\usepackage{amssymb}
\usepackage{amsthm}
\usepackage{amsmath}
\usepackage{graphicx}
\usepackage{enumerate}
\newcommand{\xdownarrow}[1]{%
  {\left\downarrow\vbox to #1{}\right.\kern-\nulldelimiterspace}
}

\DeclareMathOperator{\Con}{Con}

\DeclareMathOperator{\DM}{DM}
\DeclareMathOperator{\BDM}{\mathbf{DM}}
\newtheorem{theorem}{Theorem}[section]
\newtheorem{definition}[theorem]{Definition}
\newtheorem{lemma}[theorem]{Lemma}

\newtheorem{remark}[theorem]{Remark}
\newtheorem{example}[theorem]{Example}
\newtheorem{corollary}[theorem]{Corollary}
\title{Sectionally pseudocomplemented posets}
\author{Ivan~Chajda, Helmut~L\"anger and Jan~Paseka}
\date{}
\begin{document}
\footnotetext[1]{Support of the research by \"OAD, project CZ~02/2019, and support of the research of the first author by IGA, project P\v rF~2019~015, is gratefully acknowledged.}
\maketitle
\begin{abstract}
The concept of a sectionally pseudocomplemented lattice was introduced in \cite C as an extension of relative pseudocomplementation for not necessarily distributive lattices. The typical example of such a lattice is the non-modular lattice N$_5$. The aim of this paper is to extend the concept of sectional pseudocomplementation from lattices to posets. At first we show that the class of sectionally pseudocompelemented lattices forms a variety of lattices which can be described by two simple identities. This variety has nice congruence properties. We summarize properties of sectionally pseudocomplemented posets and show differences to relative pseudocomplementation. We prove that every sectionally pseudocomplemented poset is completely $L$-semidistributive. We introduce the concept of congruence on these posets and show when the quotient structure becomes a poset again. Finally, we study the Dedekind-MacNeille completion of sectionally pseudocomplemented posets. We show that contrary to the case of relatively pseudocomplemented posets, this completion need not be sectionally pseudocomplemented but we present the construction of a so-called generalized ordinal sum which enables us to construct the Dedekind-MacNeille completion provided the completions of the summands are known. 
\end{abstract}
 
{\bf AMS Subject Classification:} 06A11, 06D15, 06B23

{\bf Keywords:} Sectional pseudocomplementation, poset, congruence, Dedekind-MacNeille completion, generalized ordinal sum.

\section{Introduction}

The concept of relative pseudocomplemented lattices was introduced by R.~P.~Dilworth in \cite D. The usefulness of this concept was shown in numerous papers and books, see e.g. the famous paper \cite{Ba} by R.~Balbes and the monograph \cite{Bi} by G.~Birkhoff. This concept was extended to posets recently by the first and second author in \cite{CLa}. Relatively pseudocomplemented lattices turn out to be distributive, a property which also holds for relatively pseudocomplemented posets (see \cite{CLa}). In order to extend relative pseudocomplementation in lattices to the non-distributive case, sectional pseudocomplementation was introduced in \cite C. The aim of the present paper is to extend sectional pseudocomplementation to posets which, of course, need not be distributive.

The concept of a sectionally pseudocomplemented lattice was introduced by the first author in \cite C. Recall that a lattice $(L,\vee,\wedge)$ is {\em sectionally pseudocomplemented} if for all $a,b\in L$ there exists the pseudocomplement of $a\vee b$ with respect to $b$ in $[b,1]$, in other words, there exists a greatest element $c$ of $L$ satisfying $(a\vee b)\wedge c=b$. In this case $c$ is called the {\em sectional pseudocomplement of $a$ with respect to $b$} and it will be denoted by $a*b$.

The aim of this paper is to extend this concept to posets.

\section{Properties of sectionally pseudocomplemented po\-sets and lattices}

Let $(P,\leq)$ be a poset, $a,b\in P$ and $A,B\subseteq P$. Recall that
\begin{align*}
L(A) & :=\{x\in P\mid x\leq y\text{ for all }y\in A\}, \\
U(A) & :=\{x\in P\mid y\leq x\text{ for all }y\in A\}.
\end{align*}
Instead of $L(\{a\})$, $L(\{a,b\})$, $L(A\cup\{a\})$, $L(A\cup B)$, $L(U(A))$ we simply write $L(a)$, $L(a,b)$, $L(A,a)$, $L(A,B)$, $LU(A)$, respectively. Analogously we proceed in similar cases. 
We also put $\xdownarrow{0.7586em} (A)=\{x\in P\mid x\leq y\text{ for some }y\in A\}$. 

We start with the following definition.

\begin{definition}
A {\em poset} $\mathbf P=(P,\leq)$ is called {\em sectionally pseudocomplemented} if for all $a,b\in P$ there exists a greatest $c\in P$ satisfying $L(U(a,b),c)=L(b)$. This element $c$ is called the {\em sectional pseudocomplement} $a*b$ of $a$ with respect to $b$. The poset $\mathbf P$ is called {\em strongly sectionally pseudocomplemented} if it is sectionally pseudocomplemented, it has a greatest element $1$ and it satisfies the condition $x\leq(x*y)*y$ {\rm(}which, as we will se later, is equivalent to the identity $x*((x*y)*y)\approx1${\rm)}.
\end{definition}

The following example shows that there really exist sectionally pseudocomplemented posets which are not strongly sectionally pseudocomplemented. Hence, we cannot expect that every sectionally pseudocomplemented poset satisfies the condition $x\leq(x*y)*y$.

\begin{example}
The poset visualized in Fig.~1

\vspace*{-4mm}

\[
\setlength{\unitlength}{7mm}
\begin{picture}(6,9)
\put(3,2){\circle*{.3}}
\put(1,4){\circle*{.3}}
\put(3,4){\circle*{.3}}
\put(5,4){\circle*{.3}}
\put(1,6){\circle*{.3}}
\put(3,6){\circle*{.3}}
\put(5,6){\circle*{.3}}
\put(3,8){\circle*{.3}}
\put(3,2){\line(-1,1)2}
\put(3,2){\line(1,1)2}
\put(3,4){\line(-1,1)2}
\put(3,4){\line(1,1)2}
\put(3,6){\line(-1,-1)2}
\put(3,6){\line(1,-1)2}
\put(3,8){\line(-1,-1)2}
\put(3,8){\line(0,-1)2}
\put(3,8){\line(1,-1)2}
\put(1,4){\line(0,1)2}
\put(5,4){\line(0,1)2}
\put(2.875,1.25){$a$}
\put(.35,3.85){$b$}
\put(5.4,3.85){$d$}
\put(3.4,3.85){$c$}
\put(.35,5.85){$e$}
\put(5.4,5.85){$g$}
\put(3.4,5.85){$f$}
\put(2.85,8.4){$1$}
\put(2.2,.3){{\rm Fig.~1}}
\end{picture}
\]

\vspace*{-3mm}

is sectionally pseudocomplemented and $*$ has the operation table
\[
\begin{array}{c|cccccccc}
* & a & b & c & d & e & f & g & 1 \\
\hline
a & 1 & 1 & c & 1 & 1 & 1 & 1 & 1 \\
b & g & 1 & g & g & 1 & 1 & g & 1 \\
c & f & f & 1 & f & 1 & f & 1 & 1 \\
d & e & e & e & 1 & e & 1 & 1 & 1 \\
e & d & f & g & d & 1 & f & g & 1 \\
f & a & e & c & g & e & 1 & g & 1 \\
g & b & b & e & f & e & f & 1 & 1 \\
1 & a & b & c & d & e & f & g & 1
\end{array}
\]
but it is not strongly sectionally pseudocomplemented since $c\not\leq a=f*a=(c*a)*a$.
\end{example}

Recall from \cite{CLa} or \cite P that the {\em relative pseudocomplement of $a$ with respect to $b$} is the greatest $d\in P$ satisfying $L(a,d)\subseteq L(b)$.

We are going to show that every sectionally pseudocomplemented lattice with $1$ is strongly sectionally pseudocomplemented.

The following lemma was proved in \cite C. For the convenience of the reader we provide a short proof.

\begin{lemma}
Every sectionally pseudocomplemented lattice $\mathbf L=(L,\vee,\wedge,*)$ satisfies $x\vee y\leq (x*y)*y$.
\end{lemma}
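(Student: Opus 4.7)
The plan is to invoke the defining universal property of the sectional pseudocomplement in two different instances and chain them together. Recall that $a*b$ is characterized as the greatest $c$ with $(a\vee b)\wedge c = b$, so to show $x\vee y \leq (x*y)*y$ it suffices to verify that $c := x\vee y$ satisfies the equation defining $(x*y)*y$, namely
\[
\bigl((x*y)\vee y\bigr)\wedge(x\vee y) = y.
\]
Then maximality immediately yields the claimed inequality.

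First I would extract the fact that $y\leq x*y$. Applying the definition of $x*y$ gives $(x\vee y)\wedge(x*y) = y$, and since $y\leq x\vee y$, this forces $y\leq x*y$. Consequently $(x*y)\vee y = x*y$, which collapses the left-hand side of the equation to $(x*y)\wedge(x\vee y)$.

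Next, that meet is once again equal to $y$ simply by the defining equation $(x\vee y)\wedge(x*y) = y$ for $x*y$. Putting the two steps together, the candidate $x\vee y$ satisfies the condition defining $(x*y)*y$, and by the maximality clause in the definition of the sectional pseudocomplement we conclude $x\vee y \leq (x*y)*y$.

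There is essentially no obstacle here: the argument is a one-line application of the universal property, with the only minor observation being the inequality $y\leq x*y$ used to rewrite $(x*y)\vee y$. The proof in the paper will presumably read exactly this way.
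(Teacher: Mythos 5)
Your proof is correct and follows essentially the same route as the paper: establish $y\leq x*y$, use it to rewrite $(x*y)\vee y$ as $x*y$, and then apply the defining equation and maximality to get $x\vee y\leq(x*y)*y$. The only cosmetic difference is that the paper obtains $y\leq x*y$ from the maximality clause applied to the candidate $y$ (via $(x\vee y)\wedge y=y$), while you read it off directly from $(x\vee y)\wedge(x*y)=y$; both are immediate.
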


\begin{proof}
Because of $(x\vee y)\wedge y=y$ we have $y\leq x*y$ and hence $((x*y)\vee y)\wedge(x\vee y)\approx(x\vee y)\wedge(x*y)=y$ whence $x\vee y\leq(x*y)*y$.
\end{proof}

In a lattice $(P,\vee,\wedge)$ the equation $L(U(a,b),c)=L(b)$ is equivalent to $(a\vee b)\wedge c=b$.

\begin{example}\label{ex2}
The poset visualized in Fig.~2

\vspace*{-4mm}

\[
\setlength{\unitlength}{7mm}
\begin{picture}(6,9)
\put(3,2){\circle*{.3}}
\put(1,4){\circle*{.3}}
\put(5,5){\circle*{.3}}
\put(1,6){\circle*{.3}}
\put(3,8){\circle*{.3}}
\put(3,2){\line(-1,1)2}
\put(3,2){\line(2,3)2}
\put(1,4){\line(0,1)2}
\put(3,8){\line(-1,-1)2}
\put(3,8){\line(2,-3)2}
\put(2.875,1.25){$0$}
\put(.35,3.85){$a$}
\put(5.4,4.85){$b$}
\put(.35,5.85){$c$}
\put(2.85,8.4){$1$}
\put(2.2,.3){{\rm Fig.~2}}
\end{picture}
\]

\vspace*{-3mm}

is a strongly sectionally pseudocomplemented lattice and $*$ has the operation table
\[
\begin{array}{c|ccccc}
* & 0 & a & b & c & 1 \\
\hline
0 & 1 & 1 & 1 & 1 & 1 \\
a & b & 1 & b & 1 & 1 \\
b & c & a & 1 & c & 1 \\
c & b & a & b & 1 & 1 \\
1 & 0 & a & b & c & 1
\end{array}
\]
but this poset is not relatively pseudocomplemented since the relative pseudocomplement of $c$ with respect to $a$ does not exist.
\end{example}

It was shown in \cite C that the class of sectionally pseudocomplemented lattices forms a variety. However, the defining identities given in \cite C are rather complicated. We present some simpler identities as follows.

\begin{theorem}
The class of sectionally pseudocomplemented lattices forms a variety which besides the lattice axioms is determined by the following identities:
\begin{enumerate}
\item[{\rm(i)}] $z\vee y\leq x*((x\vee y)\wedge(z\vee y))$,
\item[{\rm(ii)}] $(x\vee y)\wedge(x*y)\approx y$.
\end{enumerate}
\end{theorem}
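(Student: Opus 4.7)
My plan is to prove the equivalence in two directions; once that is done, the class is automatically a variety because (i) and (ii) are both identities — the inequality in (i) is shorthand for $x*((x \vee y) \wedge (z \vee y)) \vee (z \vee y) \approx x*((x \vee y) \wedge (z \vee y))$.

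For the ``only if'' direction, I would assume $\mathbf L$ is sectionally pseudocomplemented. Identity (ii) is immediate from the definition, since $x*y$ is in particular a $c$ with $(x \vee y) \wedge c = y$. For (i), the plan is to set $d := (x \vee y) \wedge (z \vee y)$ and check that $z \vee y$ satisfies $(x \vee d) \wedge (z \vee y) = d$; by the maximality of $x*d$ this will give $z \vee y \leq x*d$. The inequality $(x \vee d) \wedge (z \vee y) \leq d$ follows from $x \vee d \leq x \vee y$ (because $d \leq x \vee y$) together with
\[
(x \vee d) \wedge (z \vee y) \leq (x \vee y) \wedge (z \vee y) = d,
\]
while the reverse inequality is immediate from $d \leq x \vee d$ and $d \leq z \vee y$.

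For the converse, assume (i) and (ii). By (ii), $x*y$ is already a solution of $(x \vee y) \wedge c = y$, so only maximality needs to be established. Let $c$ be any element with $(x \vee y) \wedge c = y$. Then $y = (x \vee y) \wedge c \leq c$, so $c \vee y = c$ and hence $(x \vee y) \wedge (c \vee y) = (x \vee y) \wedge c = y$. Instantiating (i) with $z := c$ yields
\[
c = c \vee y \leq x*((x \vee y) \wedge (c \vee y)) = x*y,
\]
which is exactly the maximality of $x*y$.

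The main obstacle is spotting the substitutions that make everything line up: choosing $d = (x \vee y) \wedge (z \vee y)$ in the forward direction so that $z \vee y$ becomes the natural candidate for $x*d$, and dually choosing $z := c$ in the converse so that (i) reproduces the maximality inequality. Once those are in place the verifications reduce to short calculations using only absorption-type inequalities, so no distributivity is needed.
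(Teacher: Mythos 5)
Your proposal is correct and follows essentially the same route as the paper: identity (ii) from the definition, identity (i) by showing that $z\vee y$ witnesses the defining equation for $x*d$ with $d=(x\vee y)\wedge(z\vee y)$, and the converse by instantiating (i) at $z:=c$ after noting $y\leq c$. Your verification of $(x\vee d)\wedge(z\vee y)=d$ is a slight streamlining of the paper's computation (which first establishes $x\vee d=x\vee y$ and $z\vee d=z\vee y$), but the underlying idea is identical.
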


\begin{proof}
Let $\mathbf L=(L,\vee,\wedge)$ be a lattice and $a,b,c\in L$. First assume $\mathbf L$ to be sectionally pseudocomplemented. If $d:=(a\vee b)\wedge(c\vee b)$ then
\begin{align*}
a\vee d & =a\vee((a\vee b)\wedge(c\vee b))=a\vee(b\vee((a\vee b)\wedge(c\vee b)))= \\
& =(a\vee b)\vee((a\vee b)\wedge(c\vee b))=a\vee b, \\
c\vee d & =c\vee((a\vee b)\wedge(c\vee b))=c\vee(b\vee((a\vee b)\wedge(c\vee b)))= \\
& =(c\vee b)\vee((a\vee b)\wedge(c\vee b))=c\vee b
\end{align*}
and hence
\[
d\leq(a\vee d)\wedge(c\vee d)=(a\vee b)\wedge(c\vee b)=d,
\]
i.e.\ $(a\vee d)\wedge(c\vee d)=d$ which shows
\[
c\vee b=c\vee d\leq a*d=a*((a\vee b)\wedge(c\vee b))
\]
proving (i). Identity (ii) follows from the definition of $*$. Conversely, assume $\mathbf L$ to satisfy (i) and (ii). Then $(a\vee b)\wedge(a*b)=b$ according to (ii). If $(a\vee b)\wedge c=b$ then $b\leq c$ and hence $(a\vee b)\wedge(c\vee b)=(a\vee b)\wedge c=b$ whence
\[
c=c\vee b\leq a*((a\vee b)\wedge(c\vee b))=a*b
\]
according to (i). This shows that $a*b$ is the sectional pseudocomplement of $a$ with respect to $b$.
\end{proof}

We can prove that this variety has very strong congruence properties. Recall that an {\em algebra} $\mathbf A$ is called {\em arithmetical} if $\Theta\circ\Phi=\Phi\circ\Theta$ for all $\Theta,\Phi\in\Con\mathbf A$ and if the congruence lattice of $\mathbf A$ is distributive. (Here and in the following $\Con\mathbf A$ denotes the set of all congruences on $\mathbf A$.) Moreover, recall that an {\em algebra} $\mathbf A$ with $1$ is called {\em weakly regular} (see e.g.\ \cite{CEL}) if for arbitrary $\Theta,\Phi\in\Con\mathbf A$ we have that $[1]\Theta=[1]\Phi$ implies $\Theta=\Phi$. A {\em variety} $\mathcal V$ (with $1$) is called {\em arithmetical} or {\em weakly regular} if every of its members has the respective property.

\begin{theorem}
The variety $\mathcal V$ of sectionally pseudocomplemented lattices with $1$ is arithmetical and weakly regular.
\end{theorem}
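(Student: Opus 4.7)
My plan is to reduce both claims to term-existence criteria from universal algebra. For arithmeticity, by Pixley's theorem it suffices to exhibit a ternary term $p(x,y,z)$ in the signature $\{\vee,\wedge,*,1\}$ satisfying $p(x,y,x) \approx p(x,y,y) \approx x$ and $p(x,x,y) \approx y$. For weak regularity, by a standard Mal'cev-type criterion (the characterisation used in \cite{CEL}), it is enough to produce finitely many binary terms $t_1,\ldots,t_n$ such that the conjunction $t_i(x,y) \approx 1$ for every $i$ is equivalent to $x \approx y$.

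For weak regularity two terms will do, namely $t_1(x,y) := x*y$ and $t_2(x,y) := y*x$. The implication $x = y \Rightarrow t_i(x,y) = 1$ follows from $a*a = 1$ (the largest $c$ satisfying $a \wedge c = a$ is $1$). Conversely, $a*b = 1$ means that $c := 1$ already realises the defining equation $(a \vee b) \wedge c = b$, forcing $a \leq b$; together with the symmetric conclusion $b \leq a$ coming from $b*a = 1$, this gives $a = b$.

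For arithmeticity I propose the Pixley-style term
\[
p(x,y,z) := \bigl((x*y)*z\bigr) \wedge \bigl((z*y)*x\bigr) \wedge (x \vee z),
\]
modelled on the classical Heyting-algebra Pixley term. All three identities should reduce, via the lemma $u \vee v \leq (u*v)*v$ proved above together with the boundary equalities $u*u \approx 1$ and $1*v \approx v$, to elementary absorption calculations. For example $p(x,y,y) = \bigl((x*y)*y\bigr) \wedge (1*x) \wedge (x \vee y) = \bigl((x*y)*y\bigr) \wedge x \wedge (x \vee y) = x$, since $x \leq (x*y)*y$ and $x \leq x \vee y$; the identity $p(x,x,y) \approx y$ is handled symmetrically; and $p(x,y,x) = \bigl((x*y)*x\bigr) \wedge x$ collapses to $x$ because $c := x$ visibly satisfies $\bigl((x*y) \vee x\bigr) \wedge c = x$, so $x \leq (x*y)*x$. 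The only genuinely nontrivial step is guessing this Pixley term itself: the usual Heyting-algebra recipe does not transfer verbatim because $*$ is merely sectional, and the symmetric form above with the extra meetand $x \vee z$ is needed to compensate; once the term is on the table, every verification is a one-line consequence of the lemma.
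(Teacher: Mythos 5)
Your proof is correct, and the weak-regularity half coincides with the paper's argument: the same two terms $t_1(x,y)=x*y$ and $t_2(x,y)=y*x$, the same Mal'cev-type criterion cited from \cite{CEL}, and the same observations $x*x\approx1$ and ($x*y=1$ and $y*x=1$ imply $x=y$). For arithmeticity you take a mildly different route. The paper notes that congruence distributivity is automatic since every member has a lattice reduct, and then establishes congruence permutability with the Mal'cev term $p(x,y,z)=((x*y)*z)\wedge((z*y)*x)$, verified from $x*x\approx1$, $1*y\approx y$ and the lemma $x\vee y\le(x*y)*y$. You instead exhibit a single Pixley term, which is exactly the paper's term with the additional meetand $x\vee z$; your three verifications are sound, the only genuinely new one being $p(x,y,x)\approx x$, which indeed follows from $x\le(x*y)*x$ since $((x*y)\vee x)\wedge x=x$. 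The trade-off is minor: your extra meetand buys the majority-type identity so that Pixley's theorem yields distributivity and permutability simultaneously, whereas the paper gets distributivity for free from the lattice operations and can therefore work with the shorter Mal'cev term.
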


\begin{proof}
Since every member of $\mathcal V$ is a lattice, $\mathcal V$ is congruence distributive. Moreover, since for
\[
p(x,y,z):=((x*y)*z)\wedge((z*y)*x).
\]
we have
\begin{align*}
p(x,x,y) & \approx((x*x)*y)\wedge((y*x)*x)\approx(1*y)\wedge((y*x)*x)\approx y\wedge((y*x)*x)\approx y, \\
p(y,x,x) & \approx((y*x)*x)\wedge((x*x)*y)\approx((y*x)*x)\wedge(1*y)\approx((y*x)*x)\wedge y\approx y,
\end{align*}
$\mathcal V$ is congruence permutable. Finally, since for
\[
t_1(x,y):=x*y\text{ and }t_2(x,y):=y*x
\]
we have that $t_1(x,y)=t_2(x,y)=1$ is equivalent to $x=y$, $\mathcal V$ is weakly regular (cf.\ \cite{CEL}).
\end{proof}

Evidently, every relatively pseudocomplemented lattice $(L,\vee,\wedge)$ is sectionally pseudocomplemented since for $a,b\in L$ we have $a*b=(a\vee b)\circ b$ where $*$ and $\circ$ denote sectional and relative pseudocomplementation, respectively. (Observe that $(a\vee b)\wedge b=b$ and hence $(a\vee b)\wedge((a\vee b)\circ b)=b$.) The following poset is an example of a sectionally pseudocomplemented poset which is neither a lattice nor relatively pseudocomplemented.

\begin{example}\label{ex1}
The poset visualized in Fig.~3

\vspace*{-2mm}

\[
\setlength{\unitlength}{7mm}
\begin{picture}(6,11)
\put(3,2){\circle*{.3}}
\put(1,4){\circle*{.3}}
\put(5,4){\circle*{.3}}
\put(1,6){\circle*{.3}}
\put(1,8){\circle*{.3}}
\put(5,8){\circle*{.3}}
\put(3,10){\circle*{.3}}
\put(3,2){\line(-1,1)2}
\put(3,2){\line(1,1)2}
\put(1,8){\line(0,-1)4}
\put(1,8){\line(1,-1)4}
\put(5,8){\line(-2,-1)4}
\put(5,8){\line(0,-1)4}
\put(3,10){\line(-1,-1)2}
\put(3,10){\line(1,-1)2}
\put(2.875,1.25){$0$}
\put(.35,3.85){$a$}
\put(5.4,3.85){$b$}
\put(.35,5.85){$c$}
\put(.35,7.85){$d$}
\put(5.4,7.85){$e$}
\put(2.85,10.4){$1$}
\put(2.2,.3){{\rm Fig.~3}}
\end{picture}
\]

\vspace*{-3mm}

is strongly sectionally pseudocomplemented and the operation table of $*$ looks as follows:
\[
\begin{array}{c|ccccccc}
* & 0 & a & b & c & d & e & 1 \\
\hline
0 & 1 & 1 & 1 & 1 & 1 & 1 & 1 \\
a & b & 1 & b & 1 & 1 & 1 & 1 \\
b & c & a & 1 & c & 1 & 1 & 1 \\
c & b & a & b & 1 & 1 & 1 & 1 \\
d & 0 & a & b & c & 1 & e & 1 \\
e & 0 & a & b & c & d & 1 & 1 \\
1 & 0 & a & b & c & d & e & 1
\end{array}
\]
Evidently, this poset is not a lattice. However, it is also not relatively pseudocomplemented since the relative pseudocomplement of $c$ with respect to $a$ does not exist.
\end{example}

In the following we list several important properties of sectionally pseudocomplemented posets.

\begin{theorem}\label{th2}
Let $\mathbf P=(P,\leq,*,1)$ be a sectionally pseudocomplemented poset with $1$. Then the following hold:
\begin{enumerate}
\item[{\rm(i)}] $x\leq y$ if and only if $x*y=1$,
\item[{\rm(ii)}] $x*x\approx x*1\approx1$,
\item[{\rm(iii)}] $1*x\approx x$,
\item[{\rm(iv)}] $x*(y*x)\approx1$,
\item[{\rm(v)}] $x*((y*x)*x)\approx1$,
\item[{\rm(vi)}] if $x*y=1$ or $y*x=1$ then $x*((x*y)*y)=1$,
\item[{\rm(vii)}] if $x*y=1$ then $(y*z)*(x*z)=1$,
\item[{\rm(viii)}] $L(U(x,y),x*y)\approx L(y)$.
\end{enumerate}
\end{theorem}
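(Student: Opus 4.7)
The plan is to go through (i)--(viii) in order, using (i) as the bridge between the order $\leq$ and equations involving $*$ and $1$, and invoking the defining equation $L(U(x,y),x*y)=L(y)$ directly for (viii) and implicitly elsewhere. Two elementary facts I would establish once and reuse throughout: (a) for any $S\subseteq P$ and $z\in P$, $z\in LU(S,z)$, whence $L(z)\subseteq LU(S,z)$; (b) $y\leq x*y$ always holds, since the defining equation $L(U(x,y),x*y)=L(y)$ forces $L(y)\subseteq L(x*y)$.

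For (i), if $x\leq y$ then $U(x,y)=U(y)$, so $LU(x,y)=L(y)$, and $c=1$ satisfies $L(U(x,y),c)=L(y)$; being the top, it is the greatest such, so $x*y=1$. Conversely, $x*y=1$ forces $LU(x,y)=L(y)$, and $x\in LU(x,y)$ gives $x\leq y$. Then (ii) follows by applying (i) to $x\leq x$ and $x\leq 1$; (iii) follows because $U(1,x)=\{1\}$ collapses the defining equation of $1*x$ to $L(c)=L(x)$, forcing $c=x$; and (viii) is the definition itself. For (iv), fact (a) gives $L(x)\subseteq LU(y,x)$, whence $L(U(y,x),x)=L(x)$, so $x\leq y*x$ and (i) concludes. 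For (v), set $d:=y*x$; by (iv), $x\leq d$, so $U(d,x)=U(d)$ and the defining equation for $d*x$ reduces to $L(d,c)=L(x)$; the choice $c=x$ works because $x\leq d$ gives $L(d,x)=L(x)$, yielding $x\leq d*x$ and then (i).

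Item (vi) is the main obstacle. The case $x*y=1$ is quick: by (i), $x\leq y$, and (iii) gives $(x*y)*y=1*y=y$, so $x*((x*y)*y)=x*y=1$. The case $y*x=1$ requires $x\leq(x*y)*y$. Here $y\leq x$ implies $U(x,y)=U(x)$, so $LU(x,y)=L(x)$, and the defining equation of $x*y$ reads $L(x,x*y)=L(y)$. Writing $d:=x*y$, fact (b) gives $y\leq d$, so the defining equation of $d*y$ becomes $L(d,c)=L(y)$; the identity $L(d,x)=L(y)$ just obtained shows $c=x$ is admissible, hence $x\leq d*y=(x*y)*y$, and (i) finishes.

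For (vii), the crux is that $*$ is antitone in its first argument. Given $x*y=1$, i.e.\ $x\leq y$ by (i), we have $U(y,z)\subseteq U(x,z)$ and thus $LU(x,z)\subseteq LU(y,z)$. Put $c_0:=y*z$; facts (a) and (b) give $L(z)\subseteq LU(x,z)\cap L(c_0)$, while the reverse inclusion $LU(x,z)\cap L(c_0)\subseteq LU(y,z)\cap L(c_0)=L(z)$ follows from $LU(x,z)\subseteq LU(y,z)$ and the defining equation of $c_0$. Hence $L(U(x,z),c_0)=L(z)$, so $y*z=c_0\leq x*z$, and (i) yields $(y*z)*(x*z)=1$.
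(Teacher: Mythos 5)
Your proposal is correct and follows essentially the same route as the paper: (i) via the equivalences $x\leq y \Leftrightarrow LU(x,y)=L(y) \Leftrightarrow x*y=1$, (iv)/(v) by exhibiting admissible witnesses in the defining equation, (vi) split into the same two cases (with the $y*x=1$ case reducing $L(U(x,y),x*y)=L(y)$ to $L(x*y,x)=L(y)$ and using it as admissibility of $x$ for $(x*y)*y$), and (vii) by the same sandwich of inclusions showing $y*z$ is admissible for $x*z$. Your preliminary facts (a) and (b) are just the paper's implicit observations $z\in LU(S,z)$ and $y\leq x*y$, so nothing genuinely different is going on.
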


\begin{proof}
Let $a,b,c\in P$.
\begin{enumerate}
\item[(i)] The following are equivalent:
\begin{align*}
          a & \leq b, \\
     U(a,b) & =U(b), \\
    LU(a,b) & =L(b), \\
L(U(a,b),1) & =L(b), \\
        a*b & =1.
\end{align*}
\item[(ii)] follows from (i).
\item[(iii)] The following are equivalent:
\begin{align*}
L(U(1,a),b) & =L(a), \\
       L(b) & =L(a), \\
          b & =a.
\end{align*}
\item[(iv)] We have $L(U(a,b),b)=L(b)$ implies $b\leq a*b$.
\item[(v)] Because of (iv) we have $L(U(b*a,a),a)=L(b*a,a)=L(a)$ which shows $a\leq(b*a)*a$.
\item[(vi)] If $a\leq b$ then $a\leq b=1*b=(a*b)*b$ according to (iii) and (i), and if $b\leq a$ then $L(U(a*b,b),a)=L(a*b,a)=L(U(a,b),a*b)=L(b)$ and hence $a\leq(a*b)*b$.
\item[(vii)] If $a\leq b$ then $L(c)\subseteq L(U(a,c),b*c)\subseteq L(U(b,c),b*c)=L(c)$, i.e.\ $L(U(a,c),b*c)=L(c)$ whence $b*c\leq a*c$.
\item[(viii)] follows from the definition of $*$.
\end{enumerate}
\end{proof}

\begin{remark}\label{rem1}
Assertion {\rm(vii)} of Theorem~\ref{th2} says that $*$ is antitone in the first variable, i.e.\ $x\leq y$ implies $y*z\leq x*z$. Contrary to the case of relatively pseudocomplemented posets, sectional pseudocomplementation is not monotone in the second variable. Namely, in Example~\ref{ex1} we have $0\leq a$, but $b*0=c\not\leq a=b*a$. However, $*$ need not be monotone in the second variable also in sectionally pseudocomplemented lattices as the following example shows.
\end{remark}

\begin{example}
The lattice visualized in Fig.~4

\vspace*{-4mm}

\[
\setlength{\unitlength}{7mm}
\begin{picture}(5,11)
\put(3,2){\circle*{.3}}
\put(3,4){\circle*{.3}}
\put(1,6){\circle*{.3}}
\put(3,6){\circle*{.3}}
\put(5,6){\circle*{.3}}
\put(3,8){\circle*{.3}}
\put(3,10){\circle*{.3}}
\put(3,2){\line(0,1)8}
\put(1,6){\line(1,-1)2}
\put(1,6){\line(1,1)2}
\put(5,6){\line(-1,-2)2}
\put(5,6){\line(-1,2)2}
\put(2.875,1.25){$0$}
\put(3.4,3.85){$a$}
\put(.35,5.85){$b$}
\put(3.4,5.85){$c$}
\put(5.4,5.85){$d$}
\put(2.35,7.85){$e$}
\put(2.85,10.4){$1$}
\put(2.2,.3){{\rm Fig.~4}}
\end{picture}
\]

\vspace*{-3mm}

is sectionally pseudocomplemented and $*$ has the operation table
\[
\begin{array}{c|ccccccc}
* & 0 & a & b & c & d & e & 1 \\
\hline
0 & 1 & 1 & 1 & 1 & 1 & 1 & 1 \\
a & d & 1 & 1 & 1 & d & 1 & 1 \\
b & d & c & 1 & c & d & 1 & 1 \\
c & d & b & b & 1 & d & 1 & 1 \\
d & e & a & b & c & 1 & e & 1 \\
e & d & a & b & c & d & 1 & 1 \\
1 & 0 & a & b & c & d & e & 1
\end{array}
\]
Here we have $0<a$ and $b*0=d\parallel c=b*a$.
\end{example}

For every algebra $(A,*,1)$ of type $(2,0)$ and every subset $B$ of $A$ put
\begin{align*}
L(B) & :=\{x\in A\mid x*y=1\text{ for all }y\in B\}, \\
U(B) & :=\{x\in A\mid y*x=1\text{ for all }y\in B\}
\end{align*}
We are going to show that sectionally pseudocomplemented posets can be defined as certain groupoids.

\begin{theorem}\label{th3}
An algebra $(A,*,1)$ of type $(2,0)$ can be organized into a sectionally pseudocomplemented poset with $1$ if and only if the following hold:
\begin{enumerate}
\item[{\rm(i)}] $x*x\approx1$
\item[{\rm(ii)}] $x*y=y*x=1\Rightarrow x=y$,
\item[{\rm(iii)}] $x*y=y*z=1\Rightarrow x*z=1$,
\item[{\rm(iv)}] $L(U(x,y),x*y)=L(y)$,
\item[{\rm(v)}] $L(U(x,y),z)=L(y)\Rightarrow z*(x*y)=1$.
\end{enumerate}
\end{theorem}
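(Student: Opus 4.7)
The statement is a biconditional, so I would prove both directions separately. For the forward direction, starting from a sectionally pseudocomplemented poset $(A,\leq,*,1)$, Theorem~\ref{th2}(i) identifies $x\leq y$ with $x*y=1$, which makes the algebraic definitions of $L(B)$ and $U(B)$ given in the preamble of the theorem coincide with the usual lower and upper cones of the poset. Under this identification, (i) is Theorem~\ref{th2}(ii); (ii) is antisymmetry of $\leq$; (iii) is transitivity; (iv) is Theorem~\ref{th2}(viii); and (v) follows because $a*b$ is by definition the greatest $c$ with $L(U(a,b),c)=L(b)$, so any such $c$ satisfies $c\leq a*b$, i.e.\ $c*(a*b)=1$.

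For the converse, I would assume (i)--(v) on $(A,*,1)$ and define $x\leq y:\Leftrightarrow x*y=1$. Axioms (i), (ii), (iii) provide reflexivity, antisymmetry and transitivity, so $(A,\leq)$ is a poset, and the algebraic $L$ and $U$ become the literal lower/upper cones of this poset. The first genuinely nontrivial step is to show that $1$ is the greatest element: instantiate (iv) at $y:=x$; using (i) and $U(x,x)=U(x)$ it reads $L(U(x),1)=L(x)$. Because $L(U(x))=L(x)$ in any poset, this forces $L(x)\cap L(1)=L(x)$, hence $L(x)\subseteq L(1)$; since $x\in L(x)$, we conclude $x\leq 1$.

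Once $1$ is known to be the greatest element, the sectional pseudocomplement property is almost immediate: (iv) exhibits $a*b$ as a $c$ satisfying $L(U(a,b),c)=L(b)$, while (v) guarantees that any other such $c$ satisfies $c*(a*b)=1$, that is $c\leq a*b$, so $a*b$ is the largest. The main obstacle I anticipate is the step in the second paragraph: the axioms nowhere directly assert that $1$ is a top element, and one has to extract that order-theoretic role from the indirect instance $y:=x$ of (iv) together with the general identity $L(U(x))=L(x)$. Everything else is a bookkeeping exercise once one accepts the translation between $*$ and $\leq$.
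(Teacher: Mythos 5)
Your proposal is correct and follows essentially the same route as the paper: necessity via Theorem~\ref{th2} and the definition of $*$, and sufficiency by defining $x\leq y$ as $x*y=1$, reading (i)--(iii) as reflexivity, antisymmetry and transitivity, and (iv)--(v) as saying that $x*y$ is the greatest $c$ with $L(U(x,y),c)=L(y)$. The one point where you go beyond the paper's (very terse) proof is the explicit check, via (iv) with $y:=x$ and $L(U(x))=L(x)$, that the constant $1$ is indeed the greatest element of the resulting poset; the paper leaves this implicit, and your derivation of it is correct.
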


\begin{proof}
The necessity of the conditions is clear. Conversely, assume (i) -- (v) to hold. Define a binary relation $\leq$ on $A$ by $x\leq y$ if $x*y=1$ ($x,y\in A$). Now \\
(i) implies reflexivity of $\leq$, \\
(ii) implies antisymmetry of $\leq$, \\
(iii) implies transitivity of $\leq$, \\
(iv) and (v) imply that $x*y$ is the sectional pseudocomplement of $x$ with respect to $y$. \\
Hence $(A,\leq)$ is a sectionally pseudocomplemented poset with sectional pseudocomplementation $*$.
\end{proof}

Recall that a {\em lattice} $(L,\vee,\wedge)$ is called {\em completely meet-semidistributive} if the following holds:
\[
\text{If }\emptyset\neq M\subseteq L,a,b\in L\text{ and }x\wedge a=b\text{ for all }x\in M\text{ then }(\bigvee M)\wedge a=b.
\]
For posets, we modify this concept as follows.

\begin{definition}
A {\em poset} $(P,\leq)$ is called {\em completely $L$-semidistributive} if the following holds:
\[
\text{If }\emptyset\neq M\subseteq P,a,b\in P\text{ and }L(x,a)=L(b)\text{ for all }x\in M\text{ then }L(U(M),a)=L(b).
\]
\end{definition}

\begin{theorem}
Let $\mathbf P=(P,\leq,*)$ be a sectionally pseudocomplemented poset. Then $\mathbf P$ is completely $L$-semidistributive.
\end{theorem}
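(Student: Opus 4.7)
The plan is to verify both inclusions $L(b)\subseteq L(U(M),a)$ and $L(U(M),a)\subseteq L(b)$ in the conclusion. Assume $\emptyset\neq M\subseteq P$, $a,b\in P$ and $L(x,a)=L(b)$ for all $x\in M$. From this hypothesis I immediately extract that $b\leq a$ and $b\leq x$ for every $x\in M$; these follow since $b\in L(b)=L(x,a)\subseteq L(x)\cap L(a)$.

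The easy inclusion $L(b)\subseteq L(U(M),a)$ I would dispose of first. Because $M\neq\emptyset$, pick some $x_0\in M$; then for any $u\in U(M)$ we have $b\leq x_0\leq u$, so $b\in L(U(M))$. Combined with $b\leq a$ this gives $b\in L(U(M),a)$, hence the whole principal ideal $L(b)$ lies in $L(U(M),a)$.

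The real content is the reverse inclusion, and here is the only step that uses sectional pseudocomplementation. Since $b\leq a$, we have $U(a,b)=U(a)$, and since $a\in U(a)$ one checks $L(U(a))=L(a)$. Therefore for each $x\in M$
\[
L(U(a,b),x)=L(U(a))\cap L(x)=L(a)\cap L(x)=L(x,a)=L(b).
\]
This means every $x\in M$ satisfies the defining equation of the sectional pseudocomplement $a*b$; since $a*b$ is by definition the \emph{greatest} such element, we obtain $x\leq a*b$ for all $x\in M$, i.e.\ $a*b\in U(M)$. This is the crux of the argument; identifying that the elements of $M$ themselves witness the defining equation of $a*b$ is the one step where the sectional pseudocomplementation hypothesis is genuinely used.

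Once $a*b\in U(M)$ is in hand, the rest is automatic. For any $z\in L(U(M),a)$ one has $z\leq a*b$ (as $a*b\in U(M)$) and $z\leq a$, so
\[
z\in L(a,a*b)=L(U(a,b),a*b)=L(b),
\]
using once more that $U(a,b)=U(a)$ and the defining property of $a*b$. Thus $L(U(M),a)\subseteq L(b)$, which together with the easy inclusion yields $L(U(M),a)=L(b)$, completing the proof that $\mathbf P$ is completely $L$-semidistributive.
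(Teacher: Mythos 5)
Your proof is correct and follows essentially the same route as the paper: you use $b\leq a$ to reduce $L(U(a,b),x)$ to $L(x,a)=L(b)$, conclude $x\leq a*b$ for all $x\in M$ so that $a*b\in U(M)$, and then squeeze $L(U(M),a)$ between $L(b)$ and $L(U(a,b),a*b)=L(b)$. The only difference is presentational: you spell out the two inclusions separately where the paper gives a single chain of inclusions.
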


\begin{proof}
If $\emptyset\neq M\subseteq P,a,b\in P$ and $L(x,a)=L(b)$ for all $x\in M$ then $b\leq a$ and therefore $L(U(a,b),x)=L(a,x)=L(b)$ for all $x\in M$ and hence $x\leq a*b$ for all $x\in M$ whence $a*b\in U(M)$ which finally implies
\[
L(b)\subseteq L(U(M),a)\subseteq L(a*b,a)=L(U(a,b),a*b)=L(b),
\]
i.e.\ $L(U(M),a)=L(b)$.
\end{proof}

\section{Congruences in sectionally pseudocomplemented posets}

Theorem~\ref{th2} (i) shows that in a sectionally pseudocomplemented poset $(P,\leq,*,1)$ with $1$, $\leq$ is uniquely determined by $*$. Let $(P,\leq,*,1)$ be a sectionally pseudocomplemented poset with $1$ and $\Theta\in\Con(P,*)$. We are interested in the question when $(P/\Theta,\leq')$ is a poset where $\leq'$ is defined by $[x]\Theta\leq'[y]\Theta$ if $[x]\Theta*[y]\Theta=[1]\Theta$ ($x,y\in P$). We will see that this is the case if $\Theta$ is convex, i.e.\ every class of $\Theta$ is a convex subset of $(P,\leq)$.

First we show that if $(P,\leq,*,1)$ is a finite sectionally pseudocomplemented poset with $1$ then $(P,*)$ has convex congruences.

In the following lemma and theorem we frequently use Theorem~\ref{th2} (vi).

\begin{lemma}\label{lemACC}
Let $(P,\leq,*,1)$ be a sectionally pseudocomplemented poset with $1$ satisfying the Ascending Chain Condition, let $a,b\in P$ and $\Theta\in\Con(P,*)$ and assume $a<b<(b*a)*a$ and $a\mathrel{\Theta}(b*a)*a$. Then $a\mathrel{\Theta}b$. 
\end{lemma}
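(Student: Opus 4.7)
The plan is to build a sequence $b_0 := a$, $b_1 := b$, $b_2 := (b*a)*a$, and recursively $b_{n+1} := (b_n * b_{n-1})*b_{n-1}$ for $n \geq 2$, then invoke the Ascending Chain Condition to force the sequence to stabilize, so that the given congruence $a \mathrel{\Theta} (b*a)*a$ can be propagated down to the desired $a \mathrel{\Theta} b$.

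First I would establish by induction on $n \geq 1$ that $b_{n-1} \leq b_n$. The cases $n = 1, 2$ are immediate from the hypothesis $a < b < (b*a)*a$; in the inductive step, the previously known inequality $b_{n-2} \leq b_{n-1}$ gives $b_{n-2} * b_{n-1} = 1$ via Theorem~\ref{th2}(i), so Theorem~\ref{th2}(vi) (applied with $x = b_{n-1}$, $y = b_{n-2}$, whose hypothesis $y * x = 1$ we just verified) delivers exactly $b_{n-1} \leq (b_{n-1}*b_{n-2})*b_{n-2} = b_n$. Next I would show by induction on $n \geq 1$ that $b_{n-1} \mathrel{\Theta} b_{n+1}$: the base case is the hypothesis, and for the inductive step I exploit compatibility of $\Theta$ with $*$ together with $b_{n-1}*b_n = 1$ to get $b_{n+1}*b_n \mathrel{\Theta} b_{n-1}*b_n = 1$, and then $b_{n+2} = (b_{n+1}*b_n)*b_n \mathrel{\Theta} 1*b_n = b_n$ by Theorem~\ref{th2}(iii).

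Now ACC applied to the non-decreasing chain $b_0 \leq b_1 \leq b_2 \leq \cdots$ yields a smallest $N$ with $b_{N+1} = b_{N+2}$; since $b_1 < b_2$ strictly, $N \geq 1$. Combining the just-proved $b_N \mathrel{\Theta} b_{N+2}$ with $b_{N+2} = b_{N+1}$ gives $b_N \mathrel{\Theta} b_{N+1}$. I then propagate downward: for $k = N-1, N-2, \ldots, 0$, transitivity on $b_k \mathrel{\Theta} b_{k+2}$ (from the second induction) and the already-obtained $b_{k+1} \mathrel{\Theta} b_{k+2}$ yields $b_k \mathrel{\Theta} b_{k+1}$, so at $k = 0$ we conclude $a = b_0 \mathrel{\Theta} b_1 = b$.

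The main conceptual point is recognizing that the recursive definition of $b_{n+1}$ reproduces the lemma's hypothesis on each adjacent triple $(b_{n-1}, b_n, b_{n+1})$: Theorem~\ref{th2}(vi) keeps the sequence ascending and congruence compatibility keeps the $\Theta$-links $b_k \mathrel{\Theta} b_{k+2}$ alive, and ACC is precisely what converts the potentially unbounded unfolding into a finite chain of transitivities closing back onto $a \mathrel{\Theta} b$.
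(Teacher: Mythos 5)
Your proof is correct and follows essentially the same route as the paper: the identical recursive sequence $b_{n+1}=(b_n*b_{n-1})*b_{n-1}$, kept ascending via Theorem~\ref{th2}(vi) and linked two steps apart by congruence compatibility, with the Ascending Chain Condition supplying termination. The only difference is presentational: you argue directly (stabilization plus downward transitivity), whereas the paper assumes $(a,b)\notin\Theta$ and derives a contradiction from an infinite strictly ascending chain.
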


\begin{proof}
Assume $(a,b)\notin\Theta$. Put $a_1:=a$, $a_2:=b$ and $a_n:=(a_{n-1}*a_{n-2})*a_{n-2}$ for $n\geq3$. Then $a_1<a_2<a_3$ and $a_3\mathrel{\Theta}a_1$. Now
\[
a_4=(a_3*a_2)*a_2\mathrel{\Theta}(a_1*a_2)*a_2=1*a_2=a_2.
\]
Moreover, $a_3\leq a_4$. Now $a_3=a_4$ would imply $a_1\mathrel{\Theta}a_3=a_4\mathrel{\Theta}a_2$, a contradiction. This shows $a_3<a_4$. Now
\[
a_5=(a_4*a_3)*a_3\mathrel{\Theta}(a_2*a_3)*a_3=1*a_3=a_3.
\]
Moreover, $a_4\leq a_5$. Now $a_4=a_5$ would imply $a_1\mathrel{\Theta}a_3\mathrel{\Theta}a_5=a_4\mathrel{\Theta}a_2$, a contradiction. This shows $a_4<a_5$. Going on in this way we would obtain an infinite strictly ascending chain $a_1<a_2<a_3<a_4<a_5<\cdots$ contradicting the Ascending Chain Condition. This shows $a\mathrel{\Theta}b$.
\end{proof}

Hence, we conclude

\begin{theorem}\label{theACC}
Let $(P,\leq,*,1)$ be a sectionally pseudocomplemented poset with $1$ satisfying the Ascending Chain Condition and let $\Theta\in\Con(P,*)$. Then $\Theta$ is convex.
\end{theorem}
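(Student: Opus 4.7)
First I would unfold the conclusion: to prove $\Theta$ convex, I fix $a,b,c\in P$ with $a\leq b\leq c$ and $a\mathrel{\Theta}c$, and aim to derive $a\mathrel{\Theta}b$ (the companion relation $b\mathrel{\Theta}c$ then follows by transitivity of $\Theta$). The case $a=b$ is trivial, so I may assume $a<b$. The whole plan is to fit the triple $a,b,(b*a)*a$ into the hypotheses of Lemma~\ref{lemACC}.

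Locating $b$ below $(b*a)*a$: since $a\leq b$ means $a*b=1$ by Theorem~\ref{th2}(i), Theorem~\ref{th2}(vi) applied with the roles $x:=b$, $y:=a$ yields $b*((b*a)*a)=1$, i.e.\ $b\leq (b*a)*a$. Combined with $a\leq (b*a)*a$ from Theorem~\ref{th2}(v), this gives the chain $a<b\leq(b*a)*a$.

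Forcing $(b*a)*a$ into the $\Theta$-class of $a$: because $\Theta$ is a congruence on $(P,*)$ and $a\mathrel{\Theta}c$, substituting $c$ for the two occurrences of $a$ yields $(b*a)*a\mathrel{\Theta}(b*c)*c$. Now $b\leq c$ gives $b*c=1$ by Theorem~\ref{th2}(i), and then Theorem~\ref{th2}(iii) collapses the right-hand side to $1*c=c$. Hence $(b*a)*a\mathrel{\Theta}c\mathrel{\Theta}a$.

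Finishing by a short case split: if $b=(b*a)*a$, the previous line already gives $b\mathrel{\Theta}a$. Otherwise $a<b<(b*a)*a$ together with $a\mathrel{\Theta}(b*a)*a$ matches the hypotheses of Lemma~\ref{lemACC} exactly, and that lemma delivers $a\mathrel{\Theta}b$. The main obstacle is not computational but organisational: recognising $(b*a)*a$ as the correct bridge element and verifying, via clauses (i), (iii), (v) and (vi) of Theorem~\ref{th2}, that it simultaneously sits above $b$ and lies in the $\Theta$-class of $a$. Once this is in place the ACC-driven Lemma~\ref{lemACC} does all the remaining work.
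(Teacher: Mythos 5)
Your proof is correct and takes essentially the same route as the paper's: establish $b\leq(b*a)*a$ via Theorem~\ref{th2}(vi), show $(b*a)*a$ is $\Theta$-related to $a$, split on whether $b=(b*a)*a$, and invoke Lemma~\ref{lemACC} in the strict case. The only cosmetic difference is that the paper replaces just the inner occurrence of $a$ by $c$ (computing $(b*c)*a=1*a=a$), while you replace both occurrences and pass through $c$; both substitutions are valid uses of the congruence property.
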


\begin{proof}
Assume $a,b,c\in P$, $a<b<c$ and $(a,c)\in\Theta$. Then $b\leq(b*a)*a$. If $b=(b*a)*a$ then $b=(b*a)*a\mathrel{\Theta}(b*c)*a=1*a=a$. If $b<(b*a)*a$ then $a\mathrel{\Theta}b$ according to Lemma~\ref{lemACC}.
\end{proof}

\begin{corollary}\label{corACC}
If $(P,\leq,*,1)$ is a finite sectionally pseudocomplemented poset with $1$ then $(P,*)$ has convex congruences.
\end{corollary}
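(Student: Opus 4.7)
The plan is to derive this as an immediate specialization of Theorem~\ref{theACC}. The only thing to verify is that the hypothesis of that theorem (the Ascending Chain Condition) is automatic for finite posets: any infinite strictly ascending chain $a_1<a_2<a_3<\cdots$ in $P$ would require infinitely many pairwise distinct elements, contradicting $|P|<\infty$. Hence every finite poset trivially satisfies the Ascending Chain Condition.

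With the ACC in hand, I would simply invoke Theorem~\ref{theACC} on the given finite sectionally pseudocomplemented poset $(P,\leq,*,1)$ with $1$: for every $\Theta\in\Con(P,*)$, that theorem already delivers convexity of $\Theta$. Since this holds for arbitrary $\Theta$, the groupoid $(P,*)$ has convex congruences, which is the conclusion of the corollary.

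There is essentially no obstacle, and no non-trivial step beyond observing the ACC. One could alternatively argue directly by downward induction on the length of a strict chain between $a$ and $c$ in the assumption $a<b<c$ with $(a,c)\in\Theta$, mimicking the chain-building argument of Lemma~\ref{lemACC}, but this would merely re-prove Theorem~\ref{theACC} in the finite case, so routing through the already-established theorem is the cleanest presentation.
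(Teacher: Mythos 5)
Your proposal is correct and follows exactly the route the paper intends: the corollary is the specialization of Theorem~\ref{theACC} to finite posets, which trivially satisfy the Ascending Chain Condition. Nothing further is needed.
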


For the infinite case we have the following result.

\begin{theorem}
Let $(P,\leq,*,1)$ be a sectionally pseudocomplemented poset with $1$ such that $x,y\in P$, $x<y<1$, $x\not\prec y$ and $x<y*x$ together imply $\Theta(x,y)=P^2$. Then $(P,*)$ has convex congruences.
\end{theorem}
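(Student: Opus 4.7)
The plan is to imitate Theorem~\ref{theACC}: given $\Theta\in\Con(P,*)$ and $a<b<c$ with $a\mathrel{\Theta}c$, I want to deduce $a\mathrel{\Theta}b$ (from which $b\mathrel{\Theta}c$ then follows by transitivity with $a\mathrel{\Theta}c$). By Theorem~\ref{th2}(vi) set $e:=(b*a)*a$, so that $b\leq e$. Applying $b*\_$ and then $\_*a$ to the pair $(a,c)\in\Theta$, and using $b*c=1$ together with $1*a=a$ from Theorem~\ref{th2}(iii), produces $(e,a)\in\Theta$.

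I then split into three cases. If $b=e$, then $(a,b)=(a,e)\in\Theta$ immediately. If $b<e$ but $e=1$, then $(a,1)\in\Theta$; applying $\_*b$ to this pair gives $(1,b)\in\Theta$ (because $a*b=1$ and $1*b=b$), and transitivity with $(a,1)\in\Theta$ yields $(a,b)\in\Theta$. The substantive case is $a<b<e<1$, and this is exactly where the hypothesis of the theorem will be invoked, for the pair $(a,e)$.

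For the hypothesis to trigger on $(a,e)$ three conditions must hold: $a<e<1$ and $a\not\prec e$ are immediate from $a<b<e<1$, so only $a<e*a$ remains. I plan to obtain this by verifying $a<b*a\leq e*a$. The first inequality is clean: if $b*a=a$, then $e=(b*a)*a=a*a=1$, contradicting $e<1$. The inequality $b*a\leq e*a$ is the main lower-set calculation: since $a\leq b*a$ reduces $U(b*a,a)$ to $U(b*a)$, the defining identity $L(U(b*a,a),e)=L(a)$ for $e$ rewrites as $L(b*a,e)=L(a)$; and since $a\leq e$ similarly reduces $U(e,a)$ to $U(e)$, this reads $L(U(e,a),b*a)=L(a)$, so by maximality in the definition of $e*a$ we obtain $b*a\leq e*a$. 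With $a<e*a$ in hand, the hypothesis forces $\Theta(a,e)=P^2$; since $(a,e)\in\Theta$ this upgrades to $\Theta=P^2$, whence $(a,b)\in\Theta$ trivially.

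The only genuinely delicate step is the lower-set manipulation showing $b*a\leq e*a$; everything else is case analysis and routine congruence bookkeeping. The hypothesis of the theorem has evidently been tailored to the pair $(a,e)=(a,(b*a)*a)$ that arises whenever convexity threatens to fail in the nontrivial way, so the real content of the proof is exhibiting $b*a$ as a witness that $e*a>a$.
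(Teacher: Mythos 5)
Your proof is correct, but it takes a different route from the paper's. The paper applies the hypothesis directly to the pair $(a,c)$: since $a<b<c$, the condition $a\not\prec c$ is automatic, so it only splits into $c=1$ (a one-line congruence computation with $a*b=1$), $c<1$ with $a<c*a$ (hypothesis gives $\Theta\supseteq\Theta(a,c)=P^2$), and $c<1$ with $a=c*a$, which is settled by the short chain $a=(a*a)*a\mathrel{\Theta}(c*a)*a=1$ and $1=(c*a)*b\mathrel{\Theta}(a*a)*b=b$. You instead follow the pattern of the ACC argument (Lemma~\ref{lemACC}): you pass to $e:=(b*a)*a$, derive $(e,a)\in\Theta$ from $(b*a,1)\in\Theta$, and trigger the hypothesis on the pair $(a,e)$, which costs you the extra verifications $a<b*a$ (via $b*a=a$ forcing $e=1$) and $b*a\leq e*a$ (your lower-set computation $L(U(e,a),b*a)=L(b*a,e)=L(a)$, which is correct since $a\leq b*a$ and $a\leq e$ collapse the upper sets). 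What the paper's version buys is brevity and the fact that no witness for $y*x>x$ ever has to be constructed -- the degenerate case $a=c*a$ is absorbed by a direct identity computation; what your version buys is the observation that whenever convexity could fail nontrivially, the pair $(a,(b*a)*a)$ always satisfies all premises of the hypothesis (in particular $x<y*x$ is automatically witnessed by $b*a$), making the connection with the ACC lemma explicit. Both arguments are complete and correct.
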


\begin{proof}
Let $\Theta\in\Con(P,*)$ and $a,b,c\in P$ and assume $a<b<c$ and $(a,c)\in\Theta$. If $c=1$ then
\[
a\mathrel{\Theta}1=a*b\mathrel{\Theta}1*b=b.
\]
If $c<1$ and $a<c*a$ then $\Theta(a,c)=P^2$ and hence $\Theta=P^2$ which implies $a\mathrel{\Theta}b$. If $c<1$ and $a=c*a$ then
\[
a=1*a=(a*a)*a\mathrel{\Theta}(c*a)*a=a*a=1=a*b=(c*a)*b\mathrel{\Theta}(a*a)*b=1*b=b.
\]
This shows that $\Theta$ is convex.
\end{proof}

Let $(P,\leq,*,1)$ be a sectionally pseudocomplemented poset with $1$ and $\Theta\in\Con(P,*)$. We define a binary relation $\leq'$ on $P/\Theta$ by $[x]\Theta\leq'[y]\Theta$ if $[x]\Theta*[y]\Theta=[1]\Theta$ ($x,y\in P$). Now we can prove

\begin{theorem}\label{th4}
Let $(P,\leq,*,1)$ be a strongly sectionally pseudocomplemented poset and let $a,b\in P$ and $\Theta$ a convex congruence on $(A,*)$. Then the following hold:
\begin{enumerate}
\item[{\rm(i)}] If $[a]\Theta\leq'[b]\Theta$ then there exists some $d\in[b]\Theta$ with $a\leq d$,
\item[{\rm(ii)}] if $a\leq b$ then $[a]\Theta\leq'[b]\Theta$,
\item[{\rm(iii)}] $(P/\Theta,\leq')$ is a poset.
\end{enumerate}
\end{theorem}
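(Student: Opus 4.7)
The plan is to prove (i), (ii), and (iii) in order, since the proof of (iii) will rely on (i).

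For (i), the canonical candidate is $d:=(a*b)*b$. Strong sectional pseudocomplementation forces $a\leq(a*b)*b=d$. The hypothesis $[a]\Theta\leq'[b]\Theta$ unpacks as $a*b\mathrel{\Theta}1$. Applying $\Theta$ (as a congruence on $(P,*)$) to this and using Theorem~\ref{th2}(iii), we obtain $d=(a*b)*b\mathrel{\Theta}1*b=b$, so $d\in[b]\Theta$. Part (ii) is immediate: if $a\leq b$, then Theorem~\ref{th2}(i) gives $a*b=1$, so $[a]\Theta*[b]\Theta=[1]\Theta$, i.e.\ $[a]\Theta\leq'[b]\Theta$.

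For (iii), reflexivity of $\leq'$ follows from Theorem~\ref{th2}(ii). For transitivity, suppose $[a]\Theta\leq'[b]\Theta$ and $[b]\Theta\leq'[c]\Theta$. I would apply (i) once to find $d\in[b]\Theta$ with $a\leq d$, and then apply (i) again to $[d]\Theta=[b]\Theta\leq'[c]\Theta$ to find $e\in[c]\Theta$ with $d\leq e$. Then $a\leq e$ gives $a*e=1$ by Theorem~\ref{th2}(i), and since $e\mathrel{\Theta}c$, the congruence property yields $a*c\mathrel{\Theta}a*e=1$, i.e.\ $[a]\Theta\leq'[c]\Theta$.

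The main step is antisymmetry, which is where convexity is actually used. Given both $[a]\Theta\leq'[b]\Theta$ and $[b]\Theta\leq'[a]\Theta$, a single application of (i) is insufficient, because it only produces a witness $d\in[b]\Theta$ above $a$ and one cannot directly force $d\mathrel{\Theta}a$ without a comparability on the other side. The trick is to iterate: using (i) in the first inequality produces $d\in[b]\Theta$ with $a\leq d$; then, since $[d]\Theta=[b]\Theta\leq'[a]\Theta$, a second application of (i) lifts $d$ to some $e\in[a]\Theta$ with $d\leq e$. Now $a\leq d\leq e$ with $a\mathrel{\Theta}e$, and convexity of $\Theta$ forces $d\mathrel{\Theta}a$; combined with $d\mathrel{\Theta}b$ this gives $a\mathrel{\Theta}b$, so $[a]\Theta=[b]\Theta$. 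The only non-routine ingredient is the idea of iterating (i) to sandwich a representative of one class between two representatives of another; everything else is bookkeeping with Theorem~\ref{th2}.
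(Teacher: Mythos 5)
Your proposal is correct and follows essentially the same route as the paper: the witness $d=(a*b)*b$ for (i), Theorem~\ref{th2}(i) for (ii), and for (iii) the same double application of (i) to produce $a\leq d\leq e$ and invoke convexity for antisymmetry, with transitivity handled by the same lifting argument (you spell out the congruence computation $a*c\mathrel{\Theta}a*e=1$ where the paper just cites (ii); these are equivalent).
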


\begin{proof}
\
\begin{enumerate}
\item[(i)] Put $d:=(a*b)*b$. Then $d=(a*b)*b\mathrel{\Theta}1*b=b$ and $a\leq(a*b)*b=d$.
\item[(ii)] If $a\leq b$ then $a*b=1$ according to Theorem~\ref{th2} and hence $a*b\mathrel{\Theta}1$, i.e.\ $[a]\Theta\leq'[b]\Theta$.
\item[(iii)] Obviously, $\leq'$ is reflexive. Now assume $[a]\Theta\leq'[b]\Theta\leq'[a]\Theta$. Then, by (i), there exists some $d\in[b]\Theta$ with $a\leq d$. Because of $[d]\Theta=[b]\Theta\leq'[a]\Theta$ there exists some $e\in[a]\Theta$ with $d\leq e$. Since $a\leq d\leq e$, $(a,e)\in\Theta$ and $\Theta$ is convex we conclude $(a,d)\in\Theta$ showing $[a]\Theta=[d]\Theta=[b]\Theta$ proving antisymmetry of $\leq'$. Finally, let $c\in P$ and assume $[a]\Theta\leq'[b]\Theta\leq'[c]\Theta$. Then, by (i) there exists some $f\in[b]\Theta$ with $a\leq f$ and because of $[f]\Theta=[b]\Theta\leq'[c]\Theta$ some $g\in[c]\Theta$ with $f\leq g$. Because of $a\leq f\leq g$ we have $a\leq g$ which implies $[a]\Theta\leq'[g]\Theta=[c]\Theta$ by (ii), proving transitivity of $\leq'$.
\end{enumerate} 
\end{proof}

\begin{lemma}\label{anycongre}
Let $\mathbf P=(P,\leq,*,1)$ be a strongly sectionally pseudocomplemented poset, $a\in P$ and $\Theta\in\Con(P,*)$. Then $[a]\Theta$ is up-directed. Hence, if $\mathbf P$ satisfies the Ascending Chain Condition, then $[a]\Theta$ has a greatest element.
\end{lemma}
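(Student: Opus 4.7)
The plan is to produce, for any two elements $b,c\in[a]\Theta$, a common upper bound that still lies in $[a]\Theta$. The natural candidate is
\[
d:=(b*c)*c,
\]
because it is built from $b$ and $c$ alone and two different general inequalities push both $b$ and $c$ below it.

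First I would verify that $d$ is an upper bound of $\{b,c\}$. The inequality $b\leq d$ is exactly the defining identity $x\leq(x*y)*y$ of a strongly sectionally pseudocomplemented poset, applied with $x:=b$ and $y:=c$. The inequality $c\leq d$ follows from Theorem~\ref{th2}(v), namely $x*((y*x)*x)\approx1$, which combined with Theorem~\ref{th2}(i) gives $x\leq(y*x)*x$; applying this with $x:=c$ and $y:=b$ yields $c\leq(b*c)*c=d$.

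Next I would check that $d\in[a]\Theta$. Since $b,c\in[a]\Theta$ we have $b\mathrel{\Theta}c$, so using that $\Theta$ is a congruence of $(P,*)$ together with Theorem~\ref{th2}(ii) and (iii),
\[
b*c\;\mathrel{\Theta}\;c*c=1,\qquad d=(b*c)*c\;\mathrel{\Theta}\;1*c=c\mathrel{\Theta}a,
\]
so $d\in[a]\Theta$. This establishes that $[a]\Theta$ is up-directed.

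For the second assertion, I would argue by contradiction: if, under the Ascending Chain Condition, $[a]\Theta$ had no greatest element, pick any $b_0\in[a]\Theta$; then some $c_1\in[a]\Theta$ satisfies $c_1\not\leq b_0$, and by up-directedness there is $b_1\in[a]\Theta$ with $b_0\leq b_1$ and $c_1\leq b_1$, forcing $b_0<b_1$. Iterating produces an infinite strictly ascending chain $b_0<b_1<b_2<\cdots$ in $P$, contradicting the Ascending Chain Condition. I do not anticipate a real obstacle here; the only delicate point is to notice that the ``strongly'' hypothesis is essential in order to apply $b\leq(b*c)*c$ alongside Theorem~\ref{th2}(v) for the dual inequality, and that the congruence computation for membership of $d$ in $[a]\Theta$ only uses identities of $*$, so no appeal to the order relation in the quotient is needed.
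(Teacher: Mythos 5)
Your proof is correct and follows essentially the same route as the paper: the same candidate $(b*c)*c$, with $b\leq(b*c)*c$ from the strong condition, $c\leq(b*c)*c$ from Theorem~\ref{th2}(v), and the same congruence computation $(b*c)*c\mathrel{\Theta}(c*c)*c=1*c=c$ showing membership in $[a]\Theta$. Your explicit chain argument for the Ascending Chain Condition part merely spells out what the paper leaves implicit.
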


\begin{proof}
If $b,c\in[a]\Theta$ then
\[
(b*c)*c\in[(c*c)*c]\Theta=[1*c]\Theta=[c]\Theta=[a]\Theta,
\]
$b\leq(b*c)*c$ since $\mathbf P$ is strongly sectionally pseudocomplemented, and $c\leq(b*c)*c$ according to Theorem~\ref{th2} (v).
\end{proof}

From Theorem~\ref{th4} we have: If $(P,\leq,*,1)$ is a strongly sectionally pseudocomplemented poset, $\Theta$ a convex congruence on $(P,*)$, $a$ the greatest element of $[a]\Theta$ and $b$ the greatest element of $[b]\Theta$ then $a\leq b$ if and only if $[a]\Theta\leq'[b]\Theta$.

Now we solve the problem for which $\Theta\in\Con(P,*)$ the quotient $P/\Theta$ is again sectionally pseudocomplemented. We can state a sufficient condition.

\begin{definition}\label{strongcon}
Let $\mathbf P=(P,\leq,*)$ be a  sectionally pseudocomplemented poset and $\Theta\in\Con(P,*)$. We say that $\Theta$ is {\em strong} if the following holds: If $a,b\in P$, $a$ is the greatest element of $[a]\Theta$ and $b$ the greatest element of $[b]\Theta$ then $a*b$ is the greatest element of $[a*b]\Theta$. In this case we define $[a]\Theta*'[b]\Theta:=[a*b]\Theta$. 
\end{definition}

It is easy to see that if $\Theta$ is strong, $a,b,c,d\in P$, $a\leq b$ and $c$ is the greatest element of $[a]\Theta$ and $d$ the greatest element of $[b]\Theta$ then $c\leq d$.

\begin{theorem}
Let $\mathbf P=(P,\leq,*,1)$ be a strongly sectionally pseudocomplemented poset and $\Theta\in\Con(P,*)$ a strong congruence. If $\mathbf P$ satisfies the Ascending Chain Condition then $(P/\Theta,\leq',*',[1]\Theta)$ is a strongly sectionally pseudocomplemented poset.
\end{theorem}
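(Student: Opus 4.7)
The plan is to identify $P/\Theta$ with the subposet $R\subseteq P$ of greatest representatives of $\Theta$-classes: by Lemma~\ref{anycongre} combined with the ACC every class $[a]\Theta$ has a greatest element, which I denote $\bar a$, and I set $R:=\{\bar a\mid a\in P\}$. The preliminaries then fall out quickly. Theorem~\ref{theACC} makes $\Theta$ convex, Theorem~\ref{th4} turns $(P/\Theta,\leq')$ into a poset with top $[1]\Theta$ (since $x\leq1$ in $\mathbf P$ gives $[x]\Theta\leq'[1]\Theta$ by Theorem~\ref{th4}(ii)), the remark following Theorem~\ref{th4} yields the crucial translation $\bar a\leq\bar b\Longleftrightarrow[a]\Theta\leq'[b]\Theta$, and strongness of $\Theta$ closes $R$ under $*$, so that $*'$ is well-defined and represented by $\overline{a*b}=\bar a*\bar b$. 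A short consequence is the description $U'([a]\Theta,[b]\Theta)=\{[u]\Theta\mid u\in R\cap U(\bar a,\bar b)\}$, reducing upper-bound talk in the quotient to upper-bound talk in $R$.

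The main task is to show that $[a*b]\Theta=[a]\Theta*'[b]\Theta$ is the sectional pseudocomplement of $[a]\Theta$ with respect to $[b]\Theta$. For the defining equation $L'(U'([a]\Theta,[b]\Theta),[a*b]\Theta)=L'([b]\Theta)$, the inclusion $\supseteq$ is immediate from $\bar b\leq\bar a*\bar b$ (Theorem~\ref{th2}(iv)). For $\subseteq$, I take $d\in R$ with $d\leq\bar a*\bar b$ and $d\leq u$ for every $u\in R\cap U(\bar a,\bar b)$, and consider $w:=d*\bar b$. By strongness $w\in R$; Theorem~\ref{th2}(iv) gives $\bar b\leq w$; and from $d\leq\bar a*\bar b$, antitonicity (Theorem~\ref{th2}(vii)) yields $(\bar a*\bar b)*\bar b\leq d*\bar b=w$, so that the strongly-pseudocomp inequality $\bar a\leq(\bar a*\bar b)*\bar b$ forces $\bar a\leq w$. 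Hence $w\in R\cap U(\bar a,\bar b)$, so $d\leq w=d*\bar b$, and the defining identity $L(U(d,\bar b),d*\bar b)=L(\bar b)$ then forces $d\leq\bar b$.

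For maximality, suppose $[c]\Theta$ satisfies $L'(U'([a]\Theta,[b]\Theta),[c]\Theta)=L'([b]\Theta)$ and take $c$ greatest in its class. I would verify $L(U(\bar a,\bar b),c)=L(\bar b)$ directly in $P$: for $x\in L(U(\bar a,\bar b),c)$, lift to $\bar x\in R$, and use the greatest-representative translation to push the hypothesis to the conclusion $\bar x\leq\bar b$, hence $x\leq\bar x\leq\bar b$. This gives $c\leq\bar a*\bar b$, i.e., $[c]\Theta\leq'[a*b]\Theta$. The strongly-pseudocomp property for $P/\Theta$ is then immediate: $x\leq(x*y)*y$ in $\mathbf P$ yields $[x]\Theta\leq'[(x*y)*y]\Theta=([x]\Theta*'[y]\Theta)*'[y]\Theta$ via Theorem~\ref{th4}(ii). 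The central difficulty is the $\subseteq$ step above, specifically the construction $w:=d*\bar b$; this is the one place where strongness of $\Theta$, antitonicity of $*$ in the first variable, and the strongly-pseudocomp inequality of $\mathbf P$ must all be brought to bear at once.
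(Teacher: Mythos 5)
Your proposal is correct and follows essentially the same route as the paper: the paper likewise reduces everything to the subposet $Q$ of greatest representatives of $\Theta$-classes (your $R$), uses strongness of $\Theta$ to close it under $*$, verifies the defining equality $L(U(a,b),a*b)=L(b)$ relativized to that subposet, and proves maximality by lifting elements to their greatest representatives and transferring back to $\mathbf P$. The only differences are local: for the defining equality the paper uses the single upper bound $(a*b)*b\in Q$ rather than your element-dependent $w:=d*\bar b$, and in the maximality step you should also record the easy reverse inclusion $L(\bar b)\subseteq L(U(\bar a,\bar b),c)$ (from $\bar b\leq c$, which the hypothesis gives) before invoking maximality of $\bar a*\bar b$ in $\mathbf P$.
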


\begin{proof}
Since  $\mathbf P$ satisfies the Ascending Chain Condition we know from Theorems~\ref{theACC} and \ref{th4} that $\Theta$ is convex and $(P/\Theta,\leq')$ a poset. Moreover, from Lemma~\ref{anycongre} we have that any congruence class of $\Theta$ has a greatest element. Put
\[
Q:=\{x\in P\mid x\text{ is the greatest element of }[x]\Theta\}.
\]
Then $(Q,*,1)$ is a subalgebra of $(P,*,1)$. Assume $a,b\in Q$. Since $\mathbf P$ is a strongly sectionally pseudocomplemented poset and $\Theta$ is strong we have $a,b\leq(a*b)*b$ and $a*b,(a*b)*b\in Q$. This implies
\begin{align*}
L_{Q}(b) & \subseteq L_{Q}(U_{Q}(a,b),a*b)\subseteq L_{Q}(U_{Q}((a*b)*b),a*b)=L_{Q}((a*b)*b,a*b)= \\
         & =L((a*b)*b,a*b)\cap Q=L(U(a*b,b),(a*b)*b)\cap Q=L(b)\cap Q=L_{Q}(b).
\end{align*}
Note that the first inclusion follows from the fact that $b\leq a*b$ and $b\in L_{Q}U_{Q}(a,b)$. The second inclusion follows from the fact that $L_{Q}U_{Q}(a,b)\subseteq L_{Q}U_{Q}((a*b)*b)$. The first equality follows from $L_{Q}U_{Q}((a*b)*b)=L_{Q}((a*b)*b)$. The second equality follows from the definition of $L_Q$. Since $\mathbf P$ is sectionally pseudocomplemented we have the next equality. \\
Now, let $c\in Q$ be such that $L_{Q}(U_{Q}(a,b),c)=L_{Q}(b)$. We have $U_{Q}(a,b)=U(a,b)\cap Q\subseteq U(a,b)$. Hence $LU(a,b)\subseteq LU_{Q}(a,b)=\xdownarrow{0.7586em}(L_{Q}U_{Q}(a,b))$. The last equality follows from the fact that $L_{Q}U_{Q}(a,b)\subseteq LU_{Q}(a,b)$ which yields $\xdownarrow{0.7586em} (L_{Q}U_{Q}(a,b))\subseteq\xdownarrow{0.7586em}(LU_{Q}(a,b))=LU_{Q}(a,b)$ and from the fact that $y\in LU_{Q}(a,b)$ implies $y\leq x\in L_{Q}U_{Q}(a,b)$ where $x$ is the greatest element of $[y]\Theta$. \\
We obtain $L(b)\subseteq L(U(a,b),c)\subseteq\xdownarrow{0.7586em}(L_{Q}U_{Q}(a,b)\cap L_{Q}(c))=\xdownarrow{0.7586em}(L_{Q}(b))=L(b)$ since $b,c\in Q$. Since $\mathbf P$ is a sectionally pseudocomplemented poset we have $c\leq a*b$, i.e., $(Q,*,1)$ is sectionally pseudocomplemented. Since $(Q,*,1)$ is a subalgebra of $(P,*,1)$ we have that $(Q,*,1)$ is strongly sectionally pseudocomplemented. Moreover, $x\mapsto[x]\Theta$ an isomorphism from $(Q,\leq,*,1)$ to $(P/\Theta,\leq',*',[1]\Theta)$ and hence $(P/\Theta,\leq',*',[1]\Theta)$ is also a strongly sectionally pseudocomplemented poset.
\end{proof}

The following lemma shows that in a strongly sectionally pseudocomplemented poset all principal congruences are given by the congruences of the form $\Theta(c,1)$.

\begin{lemma}
Let $(P,\leq,*,1)$ be a strongly sectionally pseudocomplemented poset, assume that every congruence on $(P,*)$ is convex and let $a,b\in P$ with $a\leq b$ . Then $\Theta(b*a,1)=\Theta(a,b)$.
\end{lemma}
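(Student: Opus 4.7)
The plan is to prove the two inclusions $\Theta(b*a,1)\subseteq\Theta(a,b)$ and $\Theta(a,b)\subseteq\Theta(b*a,1)$ separately, each by verifying that the generator of one principal congruence already lies in the other. Both directions proceed by applying the binary operation $*$ to the generating pair, together with the basic identities $x*x\approx 1$ and $1*x\approx x$ from Theorem~\ref{th2}(ii),(iii), and for the harder direction also the defining inequality $x\leq(x*y)*y$ of a strongly sectionally pseudocomplemented poset combined with convexity of congruences.

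For the inclusion $\Theta(b*a,1)\subseteq\Theta(a,b)$, I would start from $(a,b)\in\Theta(a,b)$ and substitute into the first coordinate of $*$ with second coordinate $b$: since $\Theta(a,b)$ is compatible with $*$, we get $b*a\mathrel{\Theta(a,b)}b*b$, and $b*b=1$. Hence $(b*a,1)\in\Theta(a,b)$, which gives the inclusion.

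For the reverse inclusion $\Theta(a,b)\subseteq\Theta(b*a,1)$, the starting point is $(b*a,1)\in\Theta(b*a,1)$. Applying $*$ in the first coordinate with second coordinate $a$, I obtain $(b*a)*a\mathrel{\Theta(b*a,1)}1*a=a$. Now the strong condition applied with $x:=b$, $y:=a$ yields $b\leq(b*a)*a$, and combining with the hypothesis $a\leq b$ produces the chain $a\leq b\leq(b*a)*a$. Since by assumption every congruence on $(P,*)$ is convex, $\Theta(b*a,1)$ is convex, so from $(a,(b*a)*a)\in\Theta(b*a,1)$ and this chain we conclude $(a,b)\in\Theta(b*a,1)$, as desired.

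The only nontrivial step is the second inclusion, and even there no real obstacle arises: the key observation is simply that strong sectional pseudocomplementation provides an upper bound $(b*a)*a$ for $b$ that is $\Theta(b*a,1)$-related to $a$, so convexity of congruences squeezes the pair $(a,b)$ into $\Theta(b*a,1)$. No induction, iteration, or case analysis is needed.
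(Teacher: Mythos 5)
Your proof is correct and follows essentially the same route as the paper: the inclusion $\Theta(b*a,1)\subseteq\Theta(a,b)$ via $(b*a,b*b)\in\Theta(a,b)$, and the reverse via $((b*a)*a,1*a)\in\Theta(b*a,1)$ together with $a\leq b\leq(b*a)*a$ and convexity. (Only a trivial wording slip: in the first inclusion the substitution $b*a\mathrel{\Theta}b*b$ varies the \emph{second} argument of $*$, not the first.)
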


\begin{proof}
Since $(a,b)\in\Theta(a,b)$ yields $(b*a,1)=(b*a,b*b)\in\Theta(a,b)$, we have $\Theta(b*a,1)\subseteq\Theta(a,b)$. Conversely, $(b*a,1)\in\Theta(b*a,1)$ yields $((b*a)*a,a)=((b*a)*a,1*a)\in\Theta(b*a,1)$ which because of $a\leq b\leq(b*a)*a$ and the convexity of $\Theta(b*a,1)$ implies $(a,b)\in\Theta(b*a,1)$, i.e.\ $\Theta(a,b)\subseteq\Theta(b*a,1)$.
\end{proof}

\section{Completion of sectionally pseudocomplemented \\
po\-sets}

Now we consider the Dedekind-MacNeille completion of sectionally pseudocomplemented posets.

It was shown by Y.~S.~Pawar (\cite P) that the Dedekind-MacNeille completion $\BDM(\mathbf Q)$ of a relatively pseudocomplemented poset $\mathbf Q$ is relatively pseudocomplemented and that the relative pseudocomplementation in $\DM(\mathbf Q)$ extends the relative pseudocomplementation in $\mathbf Q$ if $\mathbf Q$ is canonically embedded into $\BDM(\mathbf Q)$. In contrast to this the Dedekind-MacNeille completion of a strongly sectionally pseudocomplemented poset $\mathbf P$ need not be sectionally pseudocomplemented, even if $\mathbf P$ is finite and has a greatest element.

\begin{example}
Though the poset visualized in Fig.~5

\vspace*{-4mm}

\[
\setlength{\unitlength}{7mm}
\begin{picture}(4,5)
\put(0,2){\circle*{.3}}
\put(2,2){\circle*{.3}}
\put(4,2){\circle*{.3}}
\put(2,4){\circle*{.3}}
\put(2,4){\line(-1,-1)2}
\put(2,4){\line(0,-1)2}
\put(2,4){\line(1,-1)2}
\put(-0.15,1.25){$a$}
\put(1.85,1.25){$b$}
\put(3.85,1.25){$c$}
\put(1.85,4.4){$1$}
\put(1.2,.3){{\rm Fig.~5}}
\end{picture}
\]

\vspace*{-3mm}

is strongly sectionally pseudocomplemented with
\[
\begin{array}{c|cccc}
* & a & b & c & 1 \\
\hline
a & 1 & b & c & 1 \\
b & a & 1 & c & 1 \\
c & a & b & 1 & 1 \\
1 & a & b & c & 1
\end{array}
\]
as the operation table for $*$, its Dedekind-MacNeille completion visualized in Fig.~6

\vspace*{-4mm}

\[
\setlength{\unitlength}{7mm}
\begin{picture}(6,7)
\put(3,2){\circle*{.3}}
\put(1,4){\circle*{.3}}
\put(3,4){\circle*{.3}}
\put(5,4){\circle*{.3}}
\put(3,6){\circle*{.3}}
\put(3,2){\line(-1,1)2}
\put(3,2){\line(0,1)4}
\put(3,2){\line(1,1)2}
\put(3,6){\line(-1,-1)2}
\put(3,6){\line(1,-1)2}
\put(2.85,1.25){$0$}
\put(.35,3.85){$a$}
\put(3.3,3.85){$b$}
\put(5.3,3.85){$c$}
\put(2.85,6.4){$1$}
\put(2.2,.3){{\rm Fig.~6}}
\end{picture}
\]

\vspace*{-3mm}

is not sectionally pseudocomplemented since $a*0$ does not exist.
\end{example}

For our next investigations, we introduce the following useful concepts.

\begin{definition}\label{genord}
Let $(I,\leq)$ be a chain with greatest element $\top$ and smallest element $\bot$. Let $\mathbf P_i=(P_i,\leq_i),i\in I,$ be a family of posets such that $\mathbf P_\top$ has a greatest element $1$ and such that the following hold:
\begin{enumerate}[{\rm(i)}]
\item If $i\in I$ then there are $a,b\in P_i$ with $a<b$,
\item if $i,j,k\in I$ and $i<k<j$ then $P_i\cap P_j=\emptyset$, 
\item if $i,j\in I$ and $i<j$ then $|P_i\cap P_j|\leq 1$,
\item if $i,j\in I$, $i<j$ and $P_i\cap P_j=\{a\}$ then $P_i=L_{P_i}(a)$ and $P_j=U_{P_j}(a)$.
\end{enumerate}
Put $P=\bigcup\limits_{i\in I}P_i$. For $a,b\in P$, say $a\in P_i$ and $b\in P_j$ with $i,j\in I$, define
\[
a\leq b\text{ if and only if }a=b\text{ or }(i=j\text{ and }a\leq_i b)\text{ or }i<j.
\]
We call $\mathbf P=(P,\leq)$ the {\em generalized ordinal sum} of $\mathbf P_i,i\in I$.
\end{definition}

It is elementary that $\mathbf P$ is a poset with a greatest element $1$.


Now, we can state some sufficient conditions under which the Dedekind-MacNeille completion of a sectionally pseudocomplemented poset is sectionally pseudocomplemented. By \cite{schmidt} the {Dedekind-MacNeille completion} of a poset $\mathbf P$ is (up to isomorphism) any complete lattice $\mathbf Q$ into which $\mathbf P$ can be supremum-densely and infimum-densely embedded (i.e., for every element $x\in Q$ there exist $M,N\subseteq P$ such that $x=\bigvee\varphi(M)=\bigwedge\varphi(N)$, where $\varphi\colon P\to Q$ is the embedding). We usually identify $P$ with $\varphi(P)$. In this sense $\mathbf Q$ preserves all infima and suprema existing in $\mathbf P$.

Now we turn our attention to a notion of a DM-yoked family of a generalized ordinal sum. The importance of this concept is based on the fact that  under natural assumptions 
(which are e.g. satisfied for a finite index set $I$)
the Dedekind-MacNeille completion of a generalized ordinal sum will be 
isomorphic to a generalized ordinal sum  of the respective DM-yoked family.

\begin{definition}\label{yoked}
Let $\mathbf P=(P,\leq)$ be the generalized ordinal sum of $\mathbf P_i=(P_i,\leq_i),i\in I$. We say that a family $\mathbf Q_i=(Q_i,\leq_i),i\in I,$ of posets is a {\em DM-yoked family of \/ $\mathbf P$} if the following conditions are satisfied:
\begin{enumerate}[{\rm(y1)}]
\item $\mathbf P_i$ is a sub-poset of \/ $\mathbf Q_i$ such that $\mathbf Q_i$ is the Dedekind-MacNeille completion of \/ $\mathbf P_i$ for every $i\in I$, 
\item if $i,j,k\in I$ and $i<k<j$ then $Q_i\cap Q_j=\emptyset$,
\item if $i,j\in I$ and $i<j$ then $|Q_i\cap Q_j|\leq 1$,
\item if $i,j\in I$, $i\prec j$, $P_i\cap P_j=\emptyset$, $\mathbf P_i$ has a greatest element and $\mathbf P_j$ has a smallest element then $Q_i\cap Q_j=\emptyset$,
\item if $i,j\in I$, $i\prec j$, $P_i\cap P_j=\emptyset$, $\mathbf P_i$ does not have a greatest element and $\mathbf P_j$ has a smallest element $0_{P_j}$ then $0_{P_j}$ is the greatest element of $\mathbf Q_i$,
\item if $i,j\in I$, $i\prec j$, $P_i\cap P_j=\emptyset$, $\mathbf P_j$ does not have a smallest element and $\mathbf P_i$ has a greatest element $1_{P_i}$ then $1_{P_i}$ is the smallest element of $\mathbf Q_j$,
\item if $i,j\in I$, $i\prec j$, $P_i\cap P_j=\emptyset$, $\mathbf P_j$ does not have a smallest element and $\mathbf P_i$ does not have a greatest element then the greatest element $1_{Q_i}$ of $\mathbf Q_i$ is the smallest element $0_{Q_j}$ of $\mathbf  Q_j$,
\item if $i,j\in I$, $i<j$ and $Q_i\cap Q_j=\{a\}$ then $a$ is the greatest element of $\mathbf Q_i$ and the smallest element of $\mathbf Q_j$.
\end{enumerate}
\end{definition}

The question when there exists a DM-yoked family for a given poset $\mathbf P$ which is a generalized ordinal sum of posets $\mathbf P_i=(P_i,\leq_i),i\in I,$ is positively answered in the following series of lemmas under the natural assumption that $P_j\cap (\BDM(P_i)\times \{i\})=\emptyset$ for all $i,j \in I$.

We will first need the following definition.

\begin{definition}\label{related}
Let $\mathbf P=(P,\leq)$ be the generalized ordinal sum of $\mathbf P_i=(P_i,\leq_i),i\in I$. We say that a family $\mathbf R_i=(R_i,\leq_i),i\in I,$ of posets is a {\em DM-related family of \/ $\mathbf P$} if the following conditions are satisfied:
\begin{enumerate}[{\rm(r1)}]
\item $\mathbf P_i$ is a sub-poset of \/ $\mathbf R_i$ such that $\mathbf R_i$ is the Dedekind-MacNeille completion of \/ $\mathbf P_i$ for every $i\in I$,
\item if $i\in I$ and $x\in R_i$ then $x\in R_i\setminus P_i$ if and only if $x=(A,i)$ and $A$ is a non-principal cut of $\BDM(\mathbf P_i)$.
\end{enumerate}
\end{definition}

\begin{lemma}\label{exyreld}
Let $\mathbf P=(P,\leq)$ be the generalized ordinal sum of $\mathbf P_i=(P_i,\leq_i),i\in I,$ such that $P_i\cap (\BDM(P_i)\times \{i\})=\emptyset$. Then a  DM-related family $\mathbf R_i=(R_i,\leq_i),i\in I,$ of \/ $\mathbf P$ exists.
\end{lemma}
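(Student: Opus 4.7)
The plan is to construct, for each $i\in I$ independently, the poset $\mathbf R_i$ as an isomorphic copy of the standard cut-realization of $\BDM(\mathbf P_i)$ in which principal cuts are identified with the elements of $P_i$, and all remaining (non-principal) elements are relabeled with the index $i$. The hypothesis $P_i\cap(\BDM(\mathbf P_i)\times\{i\})=\emptyset$ is exactly what will let these two kinds of elements be put together disjointly.

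First I would fix $i\in I$ and realize $\BDM(\mathbf P_i)$ concretely as the complete lattice of cuts of $\mathbf P_i$, i.e., the subsets $A\subseteq P_i$ satisfying $A=LU(A)$, with the canonical embedding $a\mapsto L(a)$ of $\mathbf P_i$ into $\BDM(\mathbf P_i)$. Writing $N_i$ for the set of non-principal cuts of $\mathbf P_i$, I would set
$$R_i:=P_i\cup\{(A,i):A\in N_i\}.$$
The assumption of the lemma is precisely what one needs to conclude that this union is disjoint, so the map $\psi_i\colon\BDM(\mathbf P_i)\to R_i$ given by $\psi_i(L(a)):=a$ for $a\in P_i$ and $\psi_i(A):=(A,i)$ for $A\in N_i$ is a well-defined bijection.

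Next I would transport the order of $\BDM(\mathbf P_i)$ along $\psi_i$ to obtain an order $\leq_i$ on $R_i$. Since $\psi_i$ restricted to the principal cuts is the canonical embedding of $\mathbf P_i$, the transported order extends the original order of $\mathbf P_i$, which makes $\mathbf P_i$ a sub-poset of $\mathbf R_i$ and yields $\mathbf R_i\cong\BDM(\mathbf P_i)$; this verifies condition (r1). Condition (r2) is immediate from the way $R_i$ was defined, since by construction $R_i\setminus P_i=\{(A,i):A\in N_i\}$.

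There is really no hard step here: the whole argument is a careful relabeling, intrinsically local to each index $i$, and none of the cross-index conditions in the definition of a DM-yoked family enters the picture. The sole non-trivial ingredient is the standing hypothesis $P_i\cap(\BDM(\mathbf P_i)\times\{i\})=\emptyset$, which rules out the only way the relabeling could collide with the original elements of $P_i$; beyond this, the proof rests only on the existence and uniqueness up to isomorphism of the Dedekind-MacNeille completion.
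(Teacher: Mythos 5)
Your construction is exactly the paper's: the same set $R_i=P_i\cup(N_i\times\{i\})$, the same bijection identifying principal cuts $L(a)$ with $a$ and tagging non-principal cuts with $i$, and the same transported order, with the hypothesis $P_i\cap(\BDM(\mathbf P_i)\times\{i\})=\emptyset$ used precisely to ensure disjointness. The argument is correct and essentially identical to the paper's proof.
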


\begin{proof}
Let  $i\in I$. We put
\[
R_i:=P_i\cup (\{A\in \BDM(P_i)\mid A \ \text{is not a principal cut in}\ \BDM(P_i)\}\times\{i\}).
\]
We have $P_i\cap(\{A\in \BDM(P_i)\mid A \ \text{is not a principal cut in}\ \BDM(P_i)\}\times\{i\})=\emptyset$. Define a mapping $\kappa_i\colon\BDM(P_i)\to R_i$ as follows:
\[
\kappa(A):=\left\{
\begin{array}{cl}
  a   & \text{if}\ A=L(a)\ \text{for some}\ a\in A \\
(A,i) & \text{if}\ A\ \text{is not a principal cut in}\ \BDM(P_i)
\end{array}
\right.
\]
for all $A\in\BDM(P_i)$. Clearly, $\kappa_i$ is a bijection. Let $x,y\in R_i$. We define $x\leq_i y$ if and only if $\kappa_i^{-1}(x)\subseteq \kappa_i^{-1}(x)$. Then $(R_i,\leq_i),i\in I,$ is a poset containing $P_i$ isomorphic with the poset $\BDM(\mathbf P_i)$. Hence the family 
$\mathbf R_i=(R_i,\leq_i),i\in I,$ is DM-related.
\end{proof}

\begin{lemma}\label{exyoked}
Let $\mathbf P=(P,\leq)$ be the generalized ordinal sum of $\mathbf P_i=(P_i,\leq_i),i\in I,$ such that $P_j\cap(\BDM(P_i)\times\{i\})=\emptyset$ for all $i,j\in I$.
Then a DM-yoked family $\mathbf Q_i=(Q_i,\leq_i),i\in I,$ of \/ $\mathbf P$ exists.
\end{lemma}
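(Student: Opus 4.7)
The plan is to construct each $\mathbf Q_i$ by starting from the DM-related family $\mathbf R_i$ produced by Lemma~\ref{exyreld} and then performing a local renaming at each cover $i\prec j$ of $I$ so as to force the boundary behaviour demanded by (y4)--(y8). Recall that elements of $R_i\setminus P_i$ have the form $(A,i)$ with $A$ a non-principal cut of $\BDM(\mathbf P_i)$; in particular, when $\mathbf P_i$ has no greatest element, the cut $P_i$ itself yields the top element $(P_i,i)$ of $R_i$, and dually $(\emptyset,j)$ is the bottom of $R_j$ whenever $\mathbf P_j$ has no smallest element.

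For each cover $i\prec j$ I would distinguish five cases. If $P_i\cap P_j\neq\emptyset$, no renaming is required, because the unique common element supplied by Definition~\ref{genord}(iv) is already the top of $R_i$ and the bottom of $R_j$. If $P_i\cap P_j=\emptyset$ and both $1_{P_i}$ and $0_{P_j}$ exist, again leave $R_i$ and $R_j$ alone, matching (y4). If only $0_{P_j}$ exists, replace $(P_i,i)$ in $R_i$ by $0_{P_j}$, yielding (y5); the dual substitution handles (y6). If neither extremum exists, replace both $(P_i,i)\in R_i$ and $(\emptyset,j)\in R_j$ by a single fresh symbol $c_{ij}$, securing (y7) and the common boundary required by (y8). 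Each such renaming is a bijection on a single extremal element, so the resulting $\mathbf Q_i$ remains order-isomorphic to $\BDM(\mathbf P_i)$ and still contains $\mathbf P_i$ as a subposet, giving (y1).

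To verify (y2) and (y3), I would check that the newly introduced labels $0_{P_j}$, $1_{P_i}$, and $c_{ij}$ cannot collide with elements of any $Q_k$ other than the two blocks they are meant to glue. For the extrema arising from $P$, this follows from conditions (i)--(iii) of Definition~\ref{genord}: each $P_i$ has at least two elements, intersections of non-adjacent blocks are empty, and adjacent blocks intersect in at most one element which must be both a top and a bottom, so the new label lives in exactly the intended pair of blocks. For the label components $(A,i)$, the standing hypothesis $P_j\cap(\BDM(P_i)\times\{i\})=\emptyset$ rules out intrusions, and freshness disposes of the $c_{ij}$. Conditions (y4)--(y7) then hold by construction, and (y8) follows by noting that the glueing point is the top of $R_i$ and the bottom of $R_j$ in every case.

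The main delicacy I anticipate is keeping the renamings globally consistent. This is where the chain structure of $I$ pays off: each $i\in I$ is the lower end of at most one cover and the upper end of at most one cover, so each $R_i$ is altered at most once at its top and at most once at its bottom, and the local rules decompose into independent instructions indexed by the covers of $I$. Once this bookkeeping is in place, all eight conditions in Definition~\ref{yoked} can be read off directly from the construction.
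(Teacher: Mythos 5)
Your construction is essentially the paper's: you build on the DM-related family of Lemma~\ref{exyreld} and glue at each cover $i\prec j$ by renaming the extremal element(s) according to whether $1_{P_i}$ and $0_{P_j}$ exist, exactly as the paper does in its two steps $R_i\mapsto S_i\mapsto Q_i$ (the only cosmetic difference being your fresh symbol $c_{ij}$ where the paper reuses $0_{R_j}=(\emptyset,j)$ as the common boundary point). The approach is correct; the lengthy case analysis verifying (y2), (y3) and (y8), which you only sketch, is precisely what the paper's proof spells out, and your outline cites the right ingredients (conditions (i)--(iii) of Definition~\ref{genord}, the hypothesis $P_j\cap(\BDM(P_i)\times\{i\})=\emptyset$, and the chain structure of $I$) for it to go through.
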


\begin{proof}
Let $\mathbf R_i=(R_i,\leq_i),i\in I,$ be the DM-related family of \/ $\mathbf P$ which exists by \\
Lemma~\ref{exyreld}. We will proceed in two steps.

Step 1: Let $i\in I$. Assume that there exists some $j\in I$ with $i\prec j$ and $P_i\cap P_j=\emptyset$. If $\mathbf P_i$ does not have a greatest element
we put $S_i:=(R_i\setminus\{1_{R_i}\})\cup\{0_{R_j}\}$ such that $0_{R_j}$ will be the greatest element of $S_i$ and the order $\leq_i$ on $S_i$ restricted to $R_i\setminus\{1_{R_i}\}$ will be the restriction of the order on $\mathbf R_i$. Clearly $P_i\subseteq S_i$ and $\mathbf P_i$ is a sub-poset of $\mathbf S_i$. If $\mathbf P_i$ does have a greatest element, we put $\mathbf S_i:=\mathbf R_i$. If $P_i\cap P_j=\{a\}$ then $a=1_{P_i}=0_{P_j}$. Hence also $a=1_{R_i}=0_{R_j}$ and we put $\mathbf S_i:=\mathbf R_i$. If there is no $j\in I$ such that $i\prec j$, we put again $\mathbf S_i:=\mathbf R_i$.
 
Step 2: Let $j\in I$. Assume there exists some $i\in I$ with $i\prec j$ and $P_i\cap P_j=\emptyset$. If $\mathbf P_j$ does not have a smallest element, we put $Q_j:=(S_j\setminus\{0_{S_j}\})\cup\{1_{S_i}\}$ such that $1_{S_i}$ will be the smallest element of $Q_j$ and the order $\leq_j$ on $Q_j$ restricted to $S_j\setminus\{0_{S_j}\}$ will be the restriction of the order on $\mathbf S_j$. Clearly $P_j\subseteq Q_j$ and $\mathbf P_j$ is a sub-poset of $\mathbf Q_j$. Otherwise we always put $\mathbf Q_j:=\mathbf S_j$. 
 
Let us now check that $\mathbf Q_i=(Q_i,\leq_i),i\in I,$ is a DM-yoked family of \/ $\mathbf P$.
\begin{enumerate}[{\rm(y1):}]
\item This follows immediately from the definition of $\mathbf Q_i$.
\item Let $i,j,k\in I$ and assume $i<k<j$. We always have $P_i\cap P_j=\emptyset$ and hence also $R_i\cap R_j=\emptyset$. \\
Step 1: Assume that $a\in S_i\cap S_j$ for some $a$. Then $a\not\in R_i$ or $a\not\in R_j$. Suppose first that $a\not\in R_i$. Then there exists some $l\in I$ with $i\prec l\leq k<j$ and $P_i\cap P_l=\emptyset$ and hence $a=0_{R_l}<1_{R_l}$. Now either $a=0_{P_l}$ or $a=(b,l)=0_{R_l}\not\in P_l$ for some $b$. Since $a\in S_j$, it can be only of a form $0_{P_j}$ or $0_{P_m}$ for $j\prec m$. But both cases are not possible (in the first case we would obtain $0_{P_j}=1_{P_l}=0_{P_l}$, in the second $P_l\cap P_m\not=\emptyset$). Assume now that $a\not\in R_j$. Then there exists some $m\in I$ such that $j\prec m$ and $a=0_{R_m}$. Since $a\in S_i$, it can be only of a form $1_{P_i}$ or $0_{R_n}$ for $i\prec n$. Since $i\prec n\leq k<j\prec m$, this is again not possible. Hence $S_i\cap S_j=\emptyset$. \\
Step 2: Suppose $a\in Q_i\cap Q_j$ for some $a$. Then $a\not\in S_i$ or $a\not\in S_j$. Assume now $a\not\in S_i$. Then there exists some $l\in I$ with $l\prec i<k<j$ and $P_l\cap P_i=\emptyset$ and hence $a=1_{S_l}\not\in P_l$ (otherwise we would have $a\in P_j$ which is not possible or $a=0_{P_m}$ for some $m\in I$ with $j\prec m$ which is also not possible). We conclude that either $a=(b,l)$ for some element $b$ or $a=0_{R_i}\in S_i$, a contradiction in the last case. Hence $a=(b,l)=1_{S_l}$. Since $a\in Q_j$, we have $a\not\in P_j$, i.e., $a=(c,j)$ for some element $c$ (which is not possible) or $a=1_{S_n}$ for some $n\prec j$ with $l\prec i<k\leq n<j$ (which is not possible) or $a=0_{P_m}$ for some $m\in I$ with $j\prec m$ (which is also not possible). Suppose $a\not\in S_j$. Then there exists some $n\in I$ with $i<k\leq n\prec j$ and $a=1_{S_n}\not\in P_n$ (otherwise we would have $a\in P_i$ which is not possible since then $a=0_{P_n}$ or $a=1_{P_r}$ for some $r\in I$ with $r\prec i$ which is not possible since $P_r\cap P_n=\emptyset$ or $a=0_{P_q}$ for some $q\in I$ with $i\prec q\leq k\leq n$ in which case $0_{P_q}=0_{1_q}$, a contradiction). Hence $a=1_{S_n}=(c,n)$ for some element $c$. Since $a\in Q_i$, we have that either there exists some $r\in I$ with $r\prec i$ such that $a=1_{S_r}$, i.e., either $a=(d,r)$ or $a=(e,i)$ for some elements $d,r$, a contradiction to $r<i<n$, or there exists some $q\in I$ with $i\prec q\leq k\leq n$ such that $a=0_{S_q}$ in which case $q=k=n$ and $a=1_{S_n}=0_{S_n}$, a contradiction. Hence $Q_i\cap Q_j=\emptyset$. 
\item Let $i,j\in I$ with $i<j$. If $i\not\prec j$ we know from (y2) that $|Q_i\cap Q_j|=0$. Hence we may assume $i\prec j$. Let $a\in Q_i\cap Q_j$. It  is enough to show that $a=1_{Q_i}=0_{Q_j}$ (which will give us also (y8)). Assume first $P_i\cap P_j\not=\emptyset$. Then $1_{P_i}=0_{P_j}$, $S_i=R_i$ and $Q_j=S_j$. Hence either $Q_j=R_j$ or there exists some $k\in I$ such that $j\prec k$, $P_j\cap P_k=\emptyset$ and ${\mathbf P}_j$ does not have a greatest element. Similarly, either $Q_i=R_i$ or there exists some $h\in I$ such that $h\prec i$, $P_h\cap P_i=\emptyset$ and ${\mathbf P}_i$ does not have a smallest element. We distinguish the following four cases: \\
\noindent{} $Q_j=R_j$ and $Q_i=R_i$: Since $a\in Q_i\cap Q_j$ we have $a\in P_i\cap P_j$, i.e., $a=1_{P_i}=0_{P_j}$. We have $a=1_{Q_i}=0_{Q_j}$. \\
\noindent{} $Q_j=R_j$ and there exists some $h\in I$ such that $h\prec i\prec j$, $P_h\cap P_i=\emptyset$ and ${\mathbf P}_i$ does not have a smallest element: Then either $a\in S_i=R_i$ (which yields that $a=1_{P_i}=0_{P_j}$) or $a=1_{S_h}$. If $a=1_{S_h}$ we have either $a=1_{P_h}\in P_h\cap P_j=\emptyset$ or $a=0_{S_i}=0_{R_i}\in R_j$, i.e., $a\in P_i\cap P_j$, a contradiction. \\
\noindent{} $Q_i=R_i$ and there exists some $k\in I$ such that $j\prec k$, $P_j\cap P_k=\emptyset$ and ${\mathbf P}_j$ does not have a greatest element: Since $a\in Q_j=S_j$ we have either $a\in P_j\cap P_i$, i.e., $a=1_{P_i}=0_{P_j}$ or $a=0_{R_k}$. If $a=0_{R_k}$ then either  
$a=0_{P_k}\in P_k\cap P_i=\emptyset$ or $a=0_{R_k}\in R_k\setminus P_k$, $a=(b,k)\not\in R_i=Q_i$, a contradiction. \\
\noindent{} There exist $h,k\in I$ such that $h\prec i \prec j\prec k$, $P_h\cap P_i=\emptyset=P_j\cap P_k$, ${\mathbf P}_j$ does not have a greatest element and ${\mathbf P}_i$ does not have a smallest element: We have $a\in Q_i\cap S_j$. Hence ($a\in R_j$ or $a=0_{R_k}$) and ($a\in R_i$ or $a=1_{R_h}$). We have four cases. Three of them can be settled as above. So assume that ($a=0_{R_k}$) and ($a=1_{R_h}$). If $a=0_{P_k}$ or $a=1_{P_h}$, we have $a\in P_k\cap P_h=\emptyset$, a contradiction. Hence $a=0_{R_k}\in R_k\setminus P_k$ and $a=1_{R_h}\in R_h\setminus P_h$, a contradiction to $h<k$. 
      
Assume now $P_i\cap P_j=\emptyset$. Then $R_i\cap R_j=\emptyset$. Clearly $S_i\cap S_j\subseteq\{0_{R_j}\}$. Assume first $S_i\cap S_j=\emptyset$. Then $Q_i\cap Q_j\subseteq\{1_{S_i}\}=\{1_{R_i}\}$. If $Q_i\cap Q_j= \{1_{S_i}\}=\{1_{R_i}\}$ then $1_{Q_i}=1_{S_i}=0_{Q_j}$. Suppose now $S_i\cap S_j= \{0_{R_j}\}$. Then $0_{R_j}=0_{S_j}=1_{S_i}$. If ${\mathbf P}_j$ does have a smallest element then $S_j=Q_j$ and $Q_i=S_i$ or $Q_i=(S_i\setminus\{0_{S_i}\})\cup\{1_{S_h}\}$ for some $h\in I$ with $h\prec i\prec j$. In the first case $Q_i\cap Q_j=S_i\cap S_j=\{0_{R_j}\}=\{0_{P_j}\}=\{0_{Q_j}\}$. But $1_{S_i}=1_{Q_i}$. Hence $Q_i\cap Q_j=\{1_{Q_i}\}$ as well. In the second case $Q_i\cap Q_j=((S_i\setminus\{0_{S_i})\cup\{1_{S_h}\})\cap S_j\subseteq(S_i\cup S_h)\cap S_j\subseteq(S_i\cup R_h)\cap S_j\subseteq\{0_{R_j}\}$. Moreover, $1_{S_i}=1_{Q_i}\in Q_i$ and $1_{S_i}=0_{Q_j}\in Q_j$. Hence $Q_i\cap Q_j=\{0_{Q_j}\}=\{1_{Q_i}\}$.
      
Assume ${\mathbf P}_j$ does not have a smallest element. Then $Q_j=(S_j\setminus \{0_{S_j}\})\cup\{1_{S_i}\}$ and $Q_i=S_i$ or $Q_i=(S_i\setminus\{0_{S_i})\cup\{1_{S_h}\}$ for some $h\in I$ with $h\prec i\prec j$. In the first case $Q_i\cap Q_j=S_i\cap ((S_j\setminus\{0_{S_j}\})\cup\{1_{S_i}\})=\{0_{R_j}\}=\{1_{S_i}\}$. Moreover, $0_{Q_j}=1_{S_i}=1_{Q_i}$. In the second case $Q_i\cap Q_j=((S_i\setminus \{0_{S_i})\cup\{1_{S_h}\})\cap((S_j\setminus\{0_{S_j}\})\cup\{1_{S_i}\})=\{1_{S_i}\}$. As above, $1_{S_i}=1_{Q_i}\in Q_i$ and $1_{S_i}=0_{Q_j}\in Q_j$.
       
Summarizing, we obtain that always $Q_i\cap Q_j=\emptyset$ or $Q_i\cap Q_j=\{a\}$ in which case $a=1_{Q_i}=0_{Q_j}$.
         
\item Assume $i,j\in I$, $i \prec j$, $P_i\cap P_j=\emptyset$, $\mathbf P_i$ has a greatest element $1_{P_i}$ and $\mathbf P_j$ has a smallest element $0_{P_j}$. Then $R_i\cap R_j=\emptyset$, $S_i=R_i$ and $Q_j=S_j$. Assume $a\in Q_i\cap Q_j$. Then $a\in Q_i\cap S_j$. We have either $a=1_{S_h}$ for some $h\in I$ with $h\prec i$ or $a\in S_i=R_i$, and either $a\in R_j$ or $a=0_{R_k}$ for some $k\in I$ with $j\prec k$. We can assume
$a\in S_i=R_i$ or $a\in R_h\cup R_i$ for some $h\prec i<j$ and $a\in S_j\subseteq R_j\cup R_k$. Moreover, $R_h\cap(R_j\cup R_k)=\emptyset$ and $R_i\cap (R_j\cup R_k)=\emptyset$, a contradiction. We conclude $Q_i\cap Q_j=\emptyset$.
 
\item Suppose $i,j\in I$, $i\prec j$, $P_i\cap P_j=\emptyset$, $\mathbf P_i$ does not have a greatest element and $\mathbf P_j$ has a smallest element $0_{P_j}$. We have $R_i\cap R_j=\emptyset$, $Q_j=S_j$ and $S_i=\{0_{R_j}\}\cup(R_i\setminus\{1_{R_i}\})$. Clearly, $0_{R_j}\in Q_i$ and $0_{R_j}\in S_j=Q_j$. Hence $0_{P_j}=0_{R_j}\in Q_i\cap Q_j$ is the greatest element of $\mathbf Q_i$.
  
\item Assume $i,j\in I$, $i\prec j$, $P_i\cap P_j=\emptyset$, $\mathbf P_j$ does not have a smallest element and $\mathbf P_i$ has a greatest element $1_{P_i}$. We obtain $S_i=R_i$, $Q_j=(S_j\setminus\{0_{S_j}\})\cup\{1_{S_i}\}$ and $R_i\cap R_j=\emptyset$. We have $1_{P_i}=1_{R_i}=1_{S_i}\in Q_j$ and $1_{S_i}\in Q_i$. Hence $1_{P_i}$ is the smallest element of $\mathbf Q_j$.
  
\item Assume $i,j\in I$, $i\prec j$, $P_i\cap P_j=\emptyset$, $\mathbf P_j$ does not have a smallest element and $\mathbf P_i$ does not have a greatest element. Then $S_i=\{0_{R_j}\}\cup(R_i\setminus\{1_{R_i}\})$ and $Q_j=(S_j\setminus\{0_{S_j}\})\cup\{1_{S_i}\}$. Since $1_{S_i}=0_{R_j}=0_{Q_j}$ and $1_{S_i}=1_{Q_i}$, we obtain that the greatest element $1_{Q_i}$ of $\mathbf Q_i$ is the smallest element $0_{Q_j}$ of $\mathbf Q_j$.
\end{enumerate}
\end{proof}

Finally, we can state our results on Dedekind-MacNeille completion of posets which are the generalized ordered sum of their parts $(P_i,\leq_i),i\in I$.

\begin{theorem}\label{genordDM}
Let $\mathbf P=(P,\leq)$ be the generalized ordinal sum of $\mathbf P_i=(P_i,\leq_i),i\in I,$ 
and $\mathbf Q_i=(Q_i,\leq_i),i\in I,$ be a DM-yoked family of \/ $\mathbf P$. Then the generalized ordinal sum $\mathbf Q=(Q,\leq)$ of $\mathbf Q_i=(Q_i,\subseteq),i\in I,$ exists. If any non-empty subset of $I$ has a maximal element then $\BDM(\mathbf P)\cong\mathbf Q$. 
\end{theorem}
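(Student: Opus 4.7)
The proof splits naturally into two stages: first, that $\mathbf{Q}$ is a legitimate generalized ordinal sum, and second, that under the chain hypothesis $\mathbf{Q}\cong\BDM(\mathbf{P})$. For the first stage I would verify the four conditions of Definition~\ref{genord} for the family $(\mathbf{Q}_i)_{i\in I}$: condition~(i) is inherited from $(\mathbf{P}_i)$ via (y1); conditions~(ii) and~(iii) are precisely (y2) and (y3); and (iv) follows from (y8), since $Q_i\cap Q_j=\{a\}$ with $i<j$ forces $a=1_{Q_i}=0_{Q_j}$ and hence $Q_i=L_{Q_i}(a)$, $Q_j=U_{Q_j}(a)$. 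The greatest element of $\mathbf{Q}_\top$ is inherited from $\mathbf{P}_\top$, so $\mathbf{Q}$ has a top $1$.

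For the second stage I plan to show $\mathbf{Q}$ is a complete lattice in which $\mathbf{P}$ sits supremum- and infimum-densely, and then invoke the characterization of $\BDM(\mathbf{P})$ recalled before the theorem. A preliminary observation: applying the hypothesis to $\{j\in I:j<i\}$ shows every $i>\bot$ has an immediate predecessor in $I$, so the only limit behaviour allowed is the possible absence of immediate successors. The lattice $\mathbf{Q}$ has top $1$ and bottom $0_{Q_\bot}$ (the latter exists because $\mathbf{Q}_\bot=\BDM(\mathbf{P}_\bot)$ is complete). For a non-empty $M\subseteq Q$, set $I_M=\{i\in I:M\cap Q_i\neq\emptyset\}$ and $k=\max I_M$; since every element of $Q_j$ with $j<k$ lies below every element of $Q_k$ in the ordinal-sum order, one obtains
\[
\bigvee\nolimits_{\mathbf{Q}}M=\bigvee\nolimits_{\mathbf{Q}_k}(M\cap Q_k),
\]
which exists by completeness of $\mathbf{Q}_k$. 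Arbitrary joins together with a top and a bottom yield arbitrary meets, so $\mathbf{Q}$ is a complete lattice. The inclusion $P\subseteq Q$ is an order-embedding because both orders are generalized ordinal sums over the same index set, coinciding on each $P_i$.

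For density, fix $x\in Q_i$. If $x\in P$ take $\{x\}$. If $x\in Q_i\setminus P_i$ is neither $0_{Q_i}$ nor $1_{Q_i}$, then $P_i\cap L_{Q_i}(x)$ and $P_i\cap U_{Q_i}(x)$ are non-empty (by density of $\mathbf{P}_i$ in $\BDM(\mathbf{P}_i)$), and $x=\bigvee_{\mathbf{Q}_i}(P_i\cap L_{Q_i}(x))=\bigwedge_{\mathbf{Q}_i}(P_i\cap U_{Q_i}(x))$; by the sup/inf transfer above, these equal the corresponding joins and meets in $\mathbf{Q}$. The substantive work lies in the boundary cases $x=0_{Q_i}\notin P$ and $x=1_{Q_i}\notin P$. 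For $x=0_{Q_i}\notin P$ with $i>\bot$, the predecessor $h$ of $i$ exists; checking (y4)--(y7) shows only scenario (y7) is compatible with $x\notin P$, and there $x=1_{Q_h}=\bigvee_{\mathbf{Q}}P_h$ while $x=\bigwedge_{\mathbf{Q}}P_i$. For $i=\bot$, $x=\bigvee_{\mathbf{Q}}\emptyset=\bigwedge_{\mathbf{Q}}P_\bot$. The case $x=1_{Q_i}\notin P$ is dual: either $i$ has a successor $j$, which forces (y7) and gives $x=\bigvee_{\mathbf{Q}}P_i=\bigwedge_{\mathbf{Q}}P_j$; or $i$ has no successor, and then $x=\bigvee_{\mathbf{Q}}P_i=\bigwedge_{\mathbf{Q}}\bigcup_{l>i}P_l$. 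Each identity is verified by analysing upper and lower bounds within each $Q_l$, using (y8) for possibly shared boundary points.

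The main obstacle is the boundary case analysis in the density step: the conditions (y5)--(y7) are engineered precisely so that the ``new'' tops and bottoms of adjacent $\mathbf{Q}_i$'s are identified consistently and become expressible as joins or meets of subsets of $P$, and the chain hypothesis on $I$ is needed to rule out upward-limit indices that would otherwise produce a bottom of some $Q_i$ unreachable by any join from $P$.
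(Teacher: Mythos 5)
Your proposal is correct, and its first stage (checking the conditions of Definition~\ref{genord} for the family $\mathbf Q_i$ via (y1), (y2), (y3) and (y8)) is exactly the paper's. For the isomorphism you take a genuinely different route: the paper works concretely with the cut lattice, using the componentwise isomorphisms $\varphi_i\colon Q_i\to\DM(P_i)$ and $\psi_i\colon\DM(P_i)\to Q_i$ to build explicit mutually inverse order-preserving maps $\varphi(a)=LU(\{x\in P\mid x\leq a\})$ and $\psi(B)=\bigvee_{Q_j}(B\cap P_j)$ with $j$ the maximal index for which $B\cap P_j\neq\emptyset$ (and $\psi(\emptyset)=\psi_\bot(\emptyset)$), the key computational fact being $\varphi(a)=\varphi_k(a)\cup\bigcup_{m<k}P_m$ for $k$ maximal with $a\in Q_k$. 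You instead invoke Schmidt's characterization quoted just before the theorem, which obliges you to prove that $\mathbf Q$ is a complete lattice --- your join-transfer $\bigvee_{\mathbf Q}M=\bigvee_{\mathbf Q_k}(M\cap Q_k)$ with $k=\max I_M$ is where the maximal-element hypothesis enters, parallel to the paper's use of the maximal index --- and that $P$ is supremum- and infimum-dense in $\mathbf Q$. Your boundary analysis is sound: for $0_{Q_i}\notin P$ (resp.\ $1_{Q_i}\notin P$ with an immediate successor) conditions (y4)--(y6) and the shared-element case each force the missing top or bottom into $P$, so only (y7) survives, and the representations $\bigvee_{\mathbf Q}P_h$, $\bigwedge_{\mathbf Q}P_i$, $\bigvee_{\mathbf Q}P_i$, $\bigwedge_{\mathbf Q}P_j$, and in the successor-free case $\bigwedge_{\mathbf Q}\bigcup_{l>i}P_l$, are all valid; your remark that the hypothesis yields immediate predecessors but not successors identifies precisely the subtlety the paper's max-index formula absorbs silently. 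What each approach buys: the paper's explicit maps avoid the density case distinctions but leave the role of (y4)--(y7) implicit, while your argument makes that role transparent and reuses the abstract characterization already recalled in the text, at the cost of the completeness lemma and the case analysis. One gloss to repair in a write-up: you appeal to a ``sup/inf transfer'', but only the supremum version holds in general, since under the hypothesis the index set of a subset need not have a minimum; for the infima you need the short direct argument (which you in effect sketch) that for $N$ contained in a single $Q_i$, or for $N=\bigcup_{l>i}P_l$ with $i$ successor-free, no lower bound of $N$ can lie in a $Q_m$ with $m$ above the relevant index, using (y8) and the two-element condition of Definition~\ref{genord}(i) to exclude shared boundary points.
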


\begin{proof}
First let us check that the assumptions of Definition~\ref{genord} are satisfied. Clearly, $\mathbf Q_i,i\in I,$ is a family of posets such that $\mathbf Q_\top$ has a greatest element $1$ (we may identify $1$ with $1_{P_\top}$) and condition (i) is satisfied by (y1). Condition (ii) follows from (y2) and condition (iii) from (y3). From (y8) we obtain condition (iv). Hence the generalized ordinal sum $\mathbf Q=(Q,\leq)$ of $\mathbf Q_i=(Q_i,\subseteq),i\in I,$ exists. \\
Assume now that any non-empty subset of $I$ has a maximal element and $I$ has a smallest element $\bot$. Let us show $\mathbf Q\cong\BDM(\mathbf P)$. Note that we have, for every $i\in I$, order isomorphisms $\varphi_i\colon Q_i\to\DM(P_i)$ and $\psi_i\colon\DM(P_i)\to Q_i$ defined by $\varphi_i(a):=L_{P_i}U_{P_i}(\{x\in P_i\mid x\leq a\})$ and $\psi_i(B):=\bigvee_{Q_i}(B\cap P_i)$ for all $a\in Q_i$ and $B\in\DM(P_i)$, and $\varphi_i\circ\psi_i=\mbox{id}_{\DM(P_i)}$, $\psi_i\circ\varphi_i=\mbox{id}_{Q_i}$. 

We define mappings $\varphi\colon Q\to\DM(P)$ and $\psi\colon\DM(P)\to Q$ as follows:
\[
\begin{array}{ccc}
\varphi(a):=LU(\{x\in P\mid x\leq a\}) & \text{ and } & \psi(B):=\left\{
\begin{array}{ll}
\psi_\bot(\emptyset)     & \text{if }B=\emptyset, \\
\bigvee_{Q_j}(B\cap P_j) & \text{otherwise}
\end{array}
\right.
\end{array}
\]
where $j:=\max\limits_{B\cap P_m\neq\emptyset}m$ ($a\in Q$, $B\in\DM(P)$). Clearly, $\varphi$ and $\psi$ are well-defined and order-preserving. Recall also that $\varphi(a)=\varphi_k(a)\cup\bigcup\limits_{m<k}P_m$ where $k:=\max\limits_{a\in Q_m}m$. We have
\[
\psi(\varphi(a))=\psi(\varphi_k(a)\cup\bigcup_{m<k}P_m)=\bigvee_{Q_j}((\varphi_k(a)\cup\bigcup_{m<k}P_m)\cap P_k)=\psi_k(\varphi_k(a))=a
\]
for all $a\in Q$, here $k:=\max\limits_{a\in Q_m}m$. Let $B\in\DM(P)$. If $B=\emptyset$ and $B=LU(B)$ then $\varphi(\psi(\emptyset))=\varphi(\psi_\bot(\emptyset))=\varphi_\bot (0_{\DM(P_\bot)})$. Assume $x\in \varphi_\bot(0_{\DM(P_\bot)})$. Then $x$ is the smallest element of $\mathbf P_\bot$, i.e., $x\leq p$ for all $p\in P$, i.e., $x\in B$, a contradiction. Hence $\varphi(\psi(\emptyset))=\emptyset$. Suppose now that $B\neq\emptyset$ and put $j:=\max\limits_{B\cap P_m\neq\emptyset}m$. Then
\[
\varphi(\psi(B))=\varphi(\bigvee_{Q_j}(B\cap P_j))=\varphi_j(\bigvee_{Q_j}(B\cap P_j))\cup\bigcup_{m<j}P_m=(B\cap P_j)\cup\bigcup_{m<j}P_m=B. 
\]
\end{proof}

Now we show that the construction of a generalized ordinal sum preserves the property of sectional pseudocomplementation.

\begin{theorem}\label{genordDMpseud}
Let $\mathbf P=(P,\leq)$ be a generalized ordinal sum of $\mathbf P_i=(P_i,\leq_i),i\in I,$ assume that $(P_i,\leq,*_i)$ are sectionally pseudocomplemented for all $i\in I$, that $j\in I$, $L_j(P_j)=\emptyset$ implies $U_s(P_s)=\emptyset$ where $s:=\max\limits_{m<j}m$ and that any non-empty subset of $I$ has a maximal element. Then ${\mathbf P}$ is sectionally pseudocomplemented.
\end{theorem}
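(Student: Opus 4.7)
The plan is to produce $a*b$ for arbitrary $a\in P_i$ and $b\in P_j$ by case analysis on the relation between $i$ and $j$ in $I$. Since $(I,\leq)$ is a chain, $i$ and $j$ are comparable, giving three cases: (1)~$a\leq b$ in $\mathbf{P}$; (2)~$a\not\leq b$ with $a,b\in P_i$ (possibly via a boundary overlap $b\in P_j\cap P_i$); and (3)~$a\not\leq b$ with $i>j$ and $b\notin P_i$. In Case~(1), $U(a,b)=U(b)$ and $LU(a,b)=L(b)$, so $L(U(a,b),c)=L(b)\cap L(c)$ equals $L(b)$ iff $b\leq c$. The greatest such $c$ is the top $1$ of $\mathbf{P}$, which exists because $\mathbf{P}_\top$ has a greatest element (Definition~\ref{genord}).

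In Case~(2), I would set $a*b:=a*_i b$. The key step is the ordinal-sum decomposition
\[
U_P(a,b)\;=\;U_{P_i}(a,b)\cup\bigcup_{l>i}P_l,\qquad L_P(U_P(a,b))\;=\;\bigcup_{l<i}P_l\;\cup\;L_{P_i}U_{P_i}(a,b),
\]
where boundary overlaps are absorbed into the unions. A direct calculation then gives $L_P(U_P(a,b),a*_i b)=\bigcup_{l<i}P_l\cup L_{P_i}(b)=L_P(b)$ by the sectional-pseudocomplement identity in $\mathbf{P}_i$. For maximality, any candidate $c\in P_k$ with $k>i$ satisfies $P_i\subseteq L_P(c)$, forcing $a\in L_P(U_P(a,b),c)\setminus L_P(b)$; a candidate with $k<i$ cannot place $b\in P_i$ into $L_P(c)$; and for $k=i$ the equation collapses to the defining condition of $a*_i b$ in $\mathbf{P}_i$, yielding $c\leq_i a*_i b$.

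In Case~(3), $b<a$ gives $U(a,b)=U(a)$ and $LU(a)=L(a)$, so the equation reduces to $L(a,c)=L(b)$. I would propose $a*b:=b$, which works since $L(a,b)=L(b)$. For maximality, I rule out $c\in P_k$ for every $k\neq j$ (and $c\neq b$ in $P_j$) by exhibiting an element of $L(a,c)\setminus L(b)$: for $k<j$ already $L(c)\subsetneq L(b)$; for $c\in P_j\setminus\{b\}$ the equation $L(a,c)=L(c)=L(b)$ would force $c=b$; for $j<k<i$ the element $c$ itself lies in $L(a,c)\setminus L(b)$, as $c\in P_k$ sits above $P_j$ yet outside $L(b)$; and for $k>i$ we have $L(a,c)=L(a)\supsetneq L(b)$. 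The critical subcase is $k=i$: here $L_P(a,c)\supseteq\bigcup_{l<i}P_l\supseteq P_j$, so if $b\neq 1_{P_j}$ any element of $P_j\setminus L_{P_j}(b)$ witnesses failure, while if $b=1_{P_j}$ the equation $L_P(a,c)=L_P(b)$ forces both that $j$ is the immediate predecessor of $i$ and that $L_{P_i}(a,c)=\emptyset$, hence that $\mathbf{P}_i$ has no smallest element; the hypothesis (applied with $i$ in place of $j$, so that $s=j$) then forces $\mathbf{P}_j$ to have no greatest element, contradicting $b=1_{P_j}$.

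The main obstacle is the careful set-theoretic bookkeeping in Case~(3), in particular the handling of intermediate blocks between $P_j$ and $P_i$ and the precise pinpointing of where the hypothesis $L_j(P_j)=\emptyset\Rightarrow U_s(P_s)=\emptyset$ must enter, namely to block the otherwise-open route $c\in P_i$ in the subcase $b=1_{P_j}$. The assumption that every non-empty subset of $I$ has a maximal element is invoked only to make the immediate predecessor $s=\max_{m<j}m$ well-defined in the hypothesis itself.
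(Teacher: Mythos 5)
Your proposal is correct and follows essentially the same route as the paper: the same piecewise definition of $a*b$ (namely $1$ if $a\leq b$, $a*_ib$ within a common block, and $b$ when $a$ lies strictly above $b$'s block), the same block-by-block verification of $L(U(a,b),a*b)=L(b)$, and the same maximality analysis whose only delicate subcase ($c$ in the block of $a$, $b=1_{P_j}$, $j\prec i$, $\mathbf P_i$ without smallest element) is killed by the hypothesis $L_i(P_i)=\emptyset\Rightarrow U_s(P_s)=\emptyset$ exactly as in the paper. The remaining differences are only bookkeeping conventions (maximal index of an element versus your boundary-overlap phrasing), not a different argument.
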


\begin{proof}
Let $i,j\in I$, $a\in P_i$ and $b\in P_j$ such that $i$ and $j$ are maximal with this property. We put
\[
a*b:=\left\{
\begin{array}{ll}
1     & \text{if }a\leq b, \\
a*_ib & \text{if }a\not\leq b\text{ and }i=j, \\
b     & \text{if }a\not\leq b\text{ and }i>j.
\end{array}
\right.
\]
We prove that $a*b$ is the sectional pseudocomplement of $a$ and $b$ in $\mathbf P$. \\
Case 1. $a\leq b$. \\
We have $L(U(a,b),1)=L(b,1)=L(b)$. \\
Case 2. $a\not\leq b$ and $i=j$. \\
We have
\begin{align*}
L(U(a,b),a*_ib) & =L(U(a,b)\cap P_i,a*_ib)=(L(U(a,b)\cap P_i,a*_ib)\cap P_i)\cup\bigcup_{m<i}P_m= \\
                & =L_i(U_i(a,b),a*_i b)\cup\bigcup_{m<i}P_m=L_i(b)\cup\bigcup_{m<i}P_m=L(b).
\end{align*}
Now assume $L(U(a,b),c)=L(b)$, $c\in P_k$, $k\in I$. Then $b\leq c$ which implies $k\geq i$. Now $k>i$ would imply
\[
a\in LU(a,b)=L(U(a,b),c)=L(b),
\]
a contradiction. Hence $k=i$ and
\[
L_i(U_i(a,b),c)=L(U(a,b),c)\cap P_i=L(b)\cap P_i=L_i(b)
\]
which implies $c\leq a*_ib$. \\
Case 3. $a\not\leq b$ and $i>j$. \\
We have $L(U(a,b),b)=L(a,b)=L(b)$. Now assume $L(U(a,b),c)=L(b)$, $c\in P_k$, $k\in I$. Then $b\leq c$ which implies $k\geq j$ and $L(b)=L(a,c)$. \\
Case 3a. $i>k$. \\
We have $c\leq b$ which yields $b=c$. \\
Case 3b. $i<k$. \\
We have $L(b)=L(a,c)=L(a)$, i.e., $a\leq b$, a contradiction. \\
Case 3c. $i=k$. \\
We obtain $k>j$. Since $L(b)=L(a,c)$, $b=1_j$ is the greatest element of $\mathbf P_j$. If $\mathbf P_i$ has a smallest element $0_i$ then necessarily $b=0_i\in P_i$, a contradiction to the assumption that $j$ is the maximal index from $I$ with $b\in P_j$. Hence $\mathbf P_i$ does not have a smallest element. Put $r:=\max\limits_{m<k}m$. Assume first that $j<r<k=i$. Then there exist by Definition~\ref{genord} (i) elements $x,y\in P_r$ with $b\leq x<y\leq a,c$. We conclude $y\in L(a,c)$, $y\not\leq b$, a contradiction. Suppose now that $j=r<k=i$. Since $L_i(P_i)=\emptyset$ and $b$ is the greatest element of $\mathbf P_j$ we obtain $\{b\}=U_r(P_r)=\emptyset$, a contradiction. Hence the only possible case is 3a which yields $b=c$ and finally that ${\mathbf P}$ is sectionally pseudocomplemented.
\end{proof}

Altogether, we can summarize our results as follows.

\begin{corollary}\label{fingenordDMpseud}
Let ${\mathbf P}=(P,\leq)$ be the generalized ordinal sum of $\mathbf P_i=(P_i,\leq_i),i\in I,$ such that $P_j\cap (\BDM(P_i)\times \{i\})=\emptyset$ for all $i,j\in I$. Let $(\DM(\mathbf P_i),\leq_i,*_i)$ be sectionally pseudocomplemented for all $i\in I$ and assume that any non-empty subset of $I$ has a maximal element and that $j\in I$ and $L_j(P_j)=\emptyset$ imply $U_s(P_s)=\emptyset$ where $s:=\max\limits_{m<j}m$. Then ${\BDM(\mathbf P)}$ is sectionally pseudocomplemented.
\end{corollary}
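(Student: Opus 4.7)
The approach is to obtain $\BDM(\mathbf P)$ as a generalized ordinal sum of Dedekind-MacNeille completions of the summands and then invoke the preservation result Theorem~\ref{genordDMpseud}. Concretely, the plan is to chain three previously proved statements: Lemma~\ref{exyoked}, Theorem~\ref{genordDM}, and Theorem~\ref{genordDMpseud}.

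First I would invoke Lemma~\ref{exyoked}. The hypothesis $P_j\cap(\BDM(P_i)\times\{i\})=\emptyset$ is exactly what that lemma requires, so it produces a DM-yoked family $\mathbf Q_i=(Q_i,\leq_i),\,i\in I$, of $\mathbf P$. By clause~(y1) of Definition~\ref{yoked} each $\mathbf Q_i$ is (up to isomorphism) the Dedekind-MacNeille completion of $\mathbf P_i$; in particular every $\mathbf Q_i$ is a complete lattice and therefore has a smallest element $0_{Q_i}$ and a greatest element $1_{Q_i}$. I would then apply Theorem~\ref{genordDM}, using the hypothesis that every non-empty subset of $I$ has a maximal element, to obtain the generalized ordinal sum $\mathbf Q=(Q,\leq)$ of the family $\mathbf Q_i,\,i\in I$, together with the crucial isomorphism $\BDM(\mathbf P)\cong\mathbf Q$.

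It remains to show that $\mathbf Q$ is sectionally pseudocomplemented; transporting the structure along the isomorphism then finishes the proof. For this I would apply Theorem~\ref{genordDMpseud} to $\mathbf Q$. The summands $(\mathbf Q_i,\leq_i,*_i)\cong(\DM(\mathbf P_i),\leq_i,*_i)$ are sectionally pseudocomplemented by hypothesis, and the maximal-element condition on non-empty subsets of $I$ is assumed. The only remaining hypothesis to check is that, for each $j\in I$, $L_{Q_j}(Q_j)=\emptyset$ implies $U_{Q_s}(Q_s)=\emptyset$ where $s:=\max\limits_{m<j}m$. The key observation is that this is vacuous for the completed family: each $\mathbf Q_j$, being a Dedekind-MacNeille completion, is a complete lattice and so carries a smallest element, whence $L_{Q_j}(Q_j)\neq\emptyset$. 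Theorem~\ref{genordDMpseud} therefore delivers sectional pseudocomplementation on $\mathbf Q$, and the isomorphism $\BDM(\mathbf P)\cong\mathbf Q$ yields the corollary.

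The main point to track carefully is the bookkeeping across the three invoked results: one must check that the DM-yoked family supplied by Lemma~\ref{exyoked} is precisely the family whose ordinal sum Theorem~\ref{genordDM} identifies with $\BDM(\mathbf P)$, so that the sectional pseudocomplement constructed on $\mathbf Q$ by Theorem~\ref{genordDMpseud} transports coherently to $\BDM(\mathbf P)$. The extra hypothesis on $L_j(P_j)$ and $U_s(P_s)$ stated in the corollary is not used directly in verifying the preservation step (where it becomes automatic for the $\mathbf Q_i$'s); it enters only to guarantee the compatibility between the $\mathbf P_i$'s and the glueings in the generalized ordinal-sum structure, with the DM-yoked completions patching up the endpoints as dictated by axioms (y4)--(y8) of Definition~\ref{yoked}.
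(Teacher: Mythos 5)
Your proposal is correct and is essentially the paper's own proof: invoke Lemma~\ref{exyoked} to get a DM-yoked family $\mathbf Q_i$, use Theorem~\ref{genordDM} to identify $\BDM(\mathbf P)$ with the generalized ordinal sum $\mathbf Q$ of that family, and apply Theorem~\ref{genordDMpseud} to conclude that $\mathbf Q$, and hence $\BDM(\mathbf P)$, is sectionally pseudocomplemented. Your extra remark that the $L_j/U_s$ hypothesis of Theorem~\ref{genordDMpseud} is vacuous for the completed family (each $\mathbf Q_i$ being a complete lattice with a smallest element) is a detail the paper leaves implicit, and it only strengthens the argument.
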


\begin{proof}
From Lemma~\ref{exyoked} we obtain that there exists a DM-yoked family $\mathbf Q_i=(Q_i,\leq_i),i\in I,$ of \/ $\mathbf P$ such that $\mathbf Q_i$ is isomorphic to a sectionally pseudocomplemented poset $\BDM(\mathbf P_i)$ for every $i\in I$. From Theorem~\ref{genordDM} we know that $\BDM(\mathbf P)$ is isomorphic to the generalized ordinal sum $\mathbf Q$ of the DM-yoked family $\mathbf Q_i=(Q_i,\leq_i),i\in I,$ of ${\mathbf P}$. Since every $\mathbf Q_i$ is sectionally pseudocomplemented we have from Theorem~\ref{genordDMpseud} that $\mathbf Q$ and hence also ${\BDM(\mathbf P)}$ are sectionally pseudocomplemented.
\end{proof}

The situation described in Theorem~\ref{genordDMpseud} and Corollary~\ref{fingenordDMpseud} can be illustrated by the following example. 

\begin{example}\label{yokedexam}
Consider the sectionally pseudocomplemented poset $\mathbf P$ from \\
Example~\ref{ex1}. It is evident that $\mathbf P$ is the generalized ordinal sum of the sectionally pseudocomplemented posets $\mathbf P_1=(P_1,\leq)=(\{0,a,b,c\},\leq)$ and $\mathbf P_2=(P_2,\leq)=(\{d,e,1\},\leq)$. Of course, $P_1\cap P_2=\emptyset$. Hence the conditions of Definition~\ref{genord} are satisfied. Then $\BDM(\mathbf P_1)$ is the lattice {\rm N}$_5$ and $\BDM(\mathbf P_2)$ the four-element Boolean algebra, i.e.\ both are sectionally pseudocomplemented lattices. The generalized ordinal sum of $\BDM(\mathbf P_1)$ and $\BDM(\mathbf P_2)$ is visualized in Fig.~7. According to Theorem~\ref{genordDMpseud} it is again sectionally pseudocomplemented.

The Dedekind-MacNeille completion of the poset visualized in Fig.~3 is visualized in Fig.~7

\vspace*{-2mm}

\[
\setlength{\unitlength}{7mm}
\begin{picture}(6,11)
\put(3,2){\circle*{.3}}
\put(1,4){\circle*{.3}}
\put(5,4){\circle*{.3}}
\put(1,6){\circle*{.3}}
\put(1,8){\circle*{.3}}
\put(5,8){\circle*{.3}}
\put(3,10){\circle*{.3}}
\put(2.33,6.67){\circle*{.3}}
\put(3,2){\line(-1,1)2}
\put(3,2){\line(1,1)2}
\put(1,6){\line(0,-1)2}
\put(1,8){\line(1,-1)4}
\put(5,8){\line(-2,-1)4}
\put(3,10){\line(-1,-1)2}
\put(3,10){\line(1,-1)2}
\put(2.875,1.25){$0$}
\put(.35,3.85){$a$}
\put(5.4,3.85){$b$}
\put(.35,5.85){$c$}
\put(2.8,6.45){$f$}
\put(.35,7.85){$d$}
\put(5.4,7.85){$e$}
\put(2.85,10.4){$1$}
\put(2.2,.3){{\rm Fig.~7}}
\end{picture}
\]

\vspace*{-3mm}

Here $L(x)$ is abbreviated by $x$ for $x\in\{0,a,b,c,d,e,1\}$ and $f$ is an abbreviation of $L(d,e)$. The operation table of $*$ in $\BDM(\mathbf P)$ looks as follows:
\[
\begin{array}{c|cccccccc}
* & 0 & a & b & c & d & e & f & 1 \\
\hline
0 & 1 & 1 & 1 & 1 & 1 & 1 & 1 & 1 \\
a & b & 1 & b & 1 & 1 & 1 & 1 & 1 \\
b & c & a & 1 & c & 1 & 1 & 1 & 1 \\
c & b & a & b & 1 & 1 & 1 & 1 & 1 \\
d & 0 & a & b & c & 1 & e & e & 1 \\
e & 0 & a & b & c & d & 1 & d & 1 \\
f & 0 & a & b & c & 1 & 1 & 1 & 1 \\
1 & 0 & a & b & c & d & e & f & 1
\end{array}
\]
According to Corollary~\ref{fingenordDMpseud}, $\BDM(\mathbf P)$ is just the generalized ordinal sum of $\BDM(\mathbf P_1)$ and $\BDM(\mathbf P_2)$.
\end{example}

Authors' addresses:

Ivan Chajda \\
Palack\'y University Olomouc \\
Faculty of Science \\
Department of Algebra and Geometry \\
17.\ listopadu 12 \\
771 46 Olomouc \\
Czech Republic \\
ivan.chajda@upol.cz

Helmut L\"anger \\
TU Wien \\
Faculty of Mathematics and Geoinformation \\
Institute of Discrete Mathematics and Geometry \\
Wiedner Hauptstra\ss e 8-10 \\
1040 Vienna \\
Austria, and \\
Palack\'y University Olomouc \\
Faculty of Science \\
Department of Algebra and Geometry \\
17.\ listopadu 12 \\
771 46 Olomouc \\
Czech Republic \\
helmut.laenger@tuwien.ac.at

Jan Paseka \\
Masaryk University Brno \\
Faculty of Science \\
Department of Mathematics and Statistics \\
Kotl\'a\v rsk\'a 2 \\
611 37 Brno \\
Czech Republic \\
paseka@math.muni.cz

\begin{thebibliography}{9}
\bibitem{Ba}
R.~Balbes, On free pseudo-complemented and relatively pseudo-complemented semi-lattices. Fund.\ Math.\ {\bf78} (1973), 119--131.
\bibitem{Bi}
G.~Birkhoff, Lattice Theory, Amer.\ Math.\ Soc., Providence, R.~I., 1979. ISBN 0-8218-1025-1.
\bibitem C
I.~Chajda, An extension of relative pseudocomplementation to non-distributive lattices. Acta Sci.\ Math.\ (Szeged) {\bf69} (2003), 491--496.
\bibitem{CEL}
I.~Chajda, G.~Eigenthaler and H.~L\"anger, Congruence Classes in Universal Algebra, Heldermann, Lemgo 2012. ISBN 3-88538-226-1.
\bibitem{CL17}
I.~Chajda and H.~L\"anger, Convex congruences. Soft Computing {\bf21} (2017), 5641--5645.
\bibitem{CLa}
I.~Chajda and H.~L\"anger, Relatively residuated lattices and posets. Math.\ Slovaca (submitted). http://arxiv.org/abs/1901.06664.
\bibitem D
R.~P.~Dilworth, Non-commutative residuated lattices, Trans.\ Amer.\ Math.\ Soc.\ {\bf46} (1939), 426--444.
\bibitem P
Y.~S.~Pawar, Implicative posets. Bull.\ Calcutta Math.\ Soc.\ {\bf85} (1993), 381--384.
\bibitem{schmidt}
J.~Schmidt, Zur Kennzeichnung der Dedekind-MacNeilleschen H\"ulle einer ge\-ord\-ne\-ten H\"ulle. Arch.\ Math.\ (Basel) {\bf7} (1956),  241--249. 
\end{thebibliography}
\end{document}